\tikzset{TL/.style={scale=.5, ineqn}}
\tikzset{JWP/.style={scale=.5, ineqn}}
\tikzset{TLnode/.style = {inner sep = 0, minimum size = 1cm}}
\tikzset{TL at/.style = {shift={($(#1)-(.5,.5)$)}, execute at begin scope = {\draw (0,0) rectangle (1,1);}}}
\tikzset{overstrand/.style={preaction={draw=white, -, line width=6pt}}}
\tikzset{normalizedvert/.style={circle,draw, minimum size = 1mm, inner sep = 0, fill=white}}
\tikzset{coordish/.style={minimum size=0, inner sep = 0, fill=black}}
\tikzset{nicelabel/.style={text height=1.5ex,text depth=.25ex}} 
\tikzset{horiznicelabel/.style={text width = .5em}} 
\tikzset{ineqn/.style={baseline = {($(current bounding box.center)-(0,1ex)$)}}}
\tikzset{ctrl/.style={controls = { #1 and #1} }}
\tikzset{fibabctree/.style={execute at begin picture={  
 \draw (-.6,0) -- node[auto,swap]{$a$} (0,0) coordinate(botvert) -- node[auto,swap]{$c$} (.6,0);
 \draw (botvert) -- node[auto,swap]{$b'$} (0,.5) coordinate(topvert);
 \foreach \sign in {+,-}
  \draw (topvert) -- +(\sign 1/6,.5) coordinate(leaf\sign);
}}}
\tikzset{fibacbase/.style={baseline=.5cm,nicelabel, execute at begin picture={  
 \draw (-.6,0) node[below right]{$a$} -- (-.25,0) coordinate(leftshadow) -- (0,0) coordinate(botvert) node[below]{$#1$} -- (.25,0) coordinate(rightshadow) -- (.6,0) node[below left]{$c$};
}}}
\tikzset{combspacing/.style = {row sep = .5cm, column sep = 1cm}}  
\tikzset{comb/.style = {combspacing, ampersand replacement = \&, row 1/.style=coordish, matrix of nodes, nodes in empty cells, nodes=draw}}
\title[Dual bases in Temperley-Lieb algebras, quantum groups, and a question of Jones]
{Dual bases in Temperley-Lieb algebras, quantum groups, and a question of Jones}
\author {Michael Brannan}
\address{Michael Brannan,
Department of Mathematics,
Mailstop 3368, Texas A\&M University, 
College Station, TX 77843-3368, USA}
\email{mbrannan@math.tamu.edu}
\author {Beno\^\i{}t Collins}
\address{Beno\^\i{}t Collins,
Department of Mathematics, Kyoto University,
and CNRS}
\email{collins@math.kyoto-u.ac.jp}
\theoremstyle{plain}
\newtheorem{lemma}{Lemma}[section]
\newtheorem{theorem}[lemma]{Theorem}
\newtheorem{proposition}[lemma]{Proposition}
\newtheorem{corollary}[lemma]{Corollary}
\newtheorem*{maintheorema}{Theorem A}
\newtheorem*{maintheoremb}{Theorem B}
\theoremstyle{definition}
\newtheorem{definition}{Definition}
\theoremstyle{remark}
\newtheorem{remark}{Remark}
\newtheorem{example}{Example}
\newtheorem{question}{Question}
\newtheorem{notation}{Notation}
\newcommand{\myop}[1]{\operatorname{#1}}
\newcommand{\Tr}{\myop{Tr}}
\newcommand{\Wg}{\myop{Wg}}
\newcommand{\N}{\mathbb N}
\newcommand{\C}{\mathbb C}
\newcommand{\R}{\mathbb R}
\newcommand{\TL}{\text{TL}}
\newcommand{\mc}{\mathcal}
\begin{document}

\begin{abstract}
We derive a Laurent series expansion for the structure coefficients appearing in the dual basis corresponding to the Kauffman diagram basis of the Temperley-Lieb algebra $\TL_k(d)$, converging  for all complex loop parameters $d$ with $|d| > 2\cos\big(\frac{\pi}{k+1}\big)$.   In particular, this yields a new formula for the structure coefficients of the Jones-Wenzl projection in $\TL_k(d)$.  The  coefficients appearing in each Laurent expansion are shown to have a natural combinatorial interpretation in terms of a certain graph structure we place on non-crossing pairings, and these coefficients turn out to have the remarkable property that they either always positive integers or always negative integers. As an application, we answer affirmatively a question of Vaughan Jones, asking whether every Temperley-Lieb diagram appears with non-zero coefficient in the expansion of each dual basis element in $\TL_k(d)$ (when $d \in \R \backslash [-2\cos\big(\frac{\pi}{k+1}\big),2\cos\big(\frac{\pi}{k+1}\big)]$).  Specializing to Jones-Wenzl projections, this result gives a new proof of a result of Ocneanu \cite{Oc02}, stating that every Temperley-Lieb diagram appears with non-zero coefficient in a Jones-Wenzl projection.  Our methods establish a connection with the Weingarten calculus on free quantum groups, and yield as a byproduct improved asymptotics for the free orthogonal Weingarten function. 
\end{abstract}

\maketitle

\section{Introduction}

The Temperley-Lieb algebras form a very important class of finite-dimensional algebras, arising in a remarkable variety of mathematical and physical contexts including lattice models \cite{TeLi71}, knot theory \cite{KaLi94}, subfactors and planar algebras \cite{JoSu97}, quantum groups \cite{Ba96, Wo87a}, and  topological quantum computation \cite{Ab08, Zh09, DeRoWa16}.  Given a complex number $d \in \C^*$ and a natural number $k \in \N$, the $k$th Temperley-Lieb algebra $\TL_k(d)$ (with loop parameter $d$) is a unital finite-dimensional complex associative algebra given by a finite set of generators $1, u_1, \ldots, u_k$ subject to the relations $u_iu_j=u_ju_i$ when $|i-j|\ge 2$, $u_iu_{i+1}u_i =  u_i$, and $u_i^2=du_i$.  These algebras admit a canonical tracial linear functional $\Tr:\TL_k(d) \to \C$, called the {\it Markov trace}.  Using the Markov trace, once can define a natural symmetric bilinear form $\langle \cdot, \cdot \rangle$ on $\TL_k(d)$, and provided $d$ is not twice the real part of a root of unity, this bilinear form is non-degenerate (see for example  \cite{KoSm91, Ca11, DiFr98}). Within this non-degenerate regime, a fundamental problem of interest is to compute explicitly the dual basis (with respect to the pairing $\langle \cdot, \cdot \rangle$) corresponding to the standard linear basis for $\TL_k(d)$ consisting of Temperley-Lieb diagrams.  See Section \ref{prelim} for precise definitions and notation.   A special case of this dual basis problem is the much studied problem of computing the coefficients appearing in the Temperley-Lieb diagram expansion of famous Jones-Wenzl projections $q_k \in \TL_k(d)$.  The Jones-Wenzl projections are certain ``highest weight''  idempotents $q_k \in \TL_k(d)$, and are key to the structure and applications of Temperley-Lieb algebras in representation theory, operator algebras and mathematical physics. Despite the importance of the Jones-Wenzl projections, remarkably very little is known about these idempotents beyond the fundamental Wenzl recursion formula \cite{We87} and its various generalizations and extensions.  See for example \cite{FrKh97, Mo15, Oc02}. 

Two fundamental questions pertaining to the Jones-Wenzl projection (and more generally any dual basis element) in $\TL_k(d)$ are: 

\begin{question} \label{q1} Is there an algorithm or formula for computing the coefficient of each Temperley-Lieb diagram appearing in such an element?
\end{question}
\begin{question}[Vaughan Jones] \label{q2}   Does each Temperley-Lieb diagram appear with non-zero coefficient in such an expansion?  
\end{question} These two questions arose in many contexts, from subfactor theory and representation theory \cite{Oc02}, to topological quantum computation \cite[Problem 3.15]{DeRoWa16}.  Over the years, some progress on these questions has been made.  Perhaps the most notable is the  announcement of a closed formula for the coefficients of the Jones-Wenzl projection $q_k$ by Ocneanu \cite{Oc02} which answered both questions in the affirmative for $q_k$, at least for real-valued loop parameters.  This formula of Ocneanu was later verified in certain special cases by Reznikoff \cite{Re02, Re07}.   Another complementary approach to the computation of the coefficients of $q_k$ was developed independently by Morrison \cite{Mo15} and Frenkel-Khovanov \cite{FrKh97}.  With regards to the more general problem of computing the coefficients of {\it arbitrary} dual basis elements in $\TL_k(d)$, essentially no prior progress seems to have been made.  In particular, it is not clear how the algebraic techniques for the coefficients of the Jones-Wenzl projections can be adapted.

In this paper, we solve these two questions for a broad class of loop parameters by connecting the problem of computing the values of the coefficients of each Temperley-Lieb diagram appearing in the expansion of a dual basis element in $\TL_k(d)$ to a seemingly different problem of computing polynomial integrals over a class of compact quantum groups, called {\it free orthogonal quantum groups}.  Using a combinatorial tool called the {\it Weingarten calculus}, we are able to interpret generic coefficients of dual basis elements (in particular, Jones-Wenzl projections) in terms of certain moments of coordinate functions over free orthogonal quantum groups taken with respect to the Haar integral.  This new operator algebraic quantum group perspective has the advantage of revealing ``hidden'' algebraic relations between the structure coefficients of the dual basis, and provides a new streamlined approach to computing the structure coefficients of {\it any} dual basis element, not just the Jones-Wenzl projection.  Using these ideas, we are able to prove the following main theorem of the paper.  See Section \ref{prelim} and Theorem \ref{thm:Laurent-series} for any undefined concepts and a more detailed restatement.  Below, $NC_2(2k)$ denotes the set of all non-crossing pair partitions of the ordered set $\{1, \ldots ,2k\}$.

\begin{maintheorema} [See Theorem \ref{thm:Laurent-series}]  \label{mt1}
Let $\{D_p\}_{p \in NC_2(2k)} \subset \TL_k(d)$ denote the linear basis of Temperley-Lieb diagrams, and denote by $\{\hat D_p\}_{p \in NC_2(2k)}$ the corresponding dual basis with respect to the bilinear form $\langle \cdot, \cdot \rangle$ induced by the Markov trace.  For each $p$, write $\hat D_p = \sum_q  \Wg_d(p,q) D_q$, where $\Wg_d(p,q) \in \C$ is the coefficient of $D_q$ in $\hat D_p$.  Then  there are positive integers $L(p,q) \in \N$ and  $\{m_r(p,q)\}_{r \in \N_0} \subset \N$ such that $\Wg_d(p,q)$ has the following Laurent series expansion
\begin{align} \label{ls}
 \Wg_d(p,q) = (-1)^{|p \vee q| +k}\sum_{r \ge 0} m_r(p,q) d^{-L(p,q)-2r} \qquad \Big(|d| > 2\cos\big(\frac{\pi}{k+1}\big) \Big).
\end{align}
\end{maintheorema}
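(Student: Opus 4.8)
The plan is to realize the dual basis structure coefficients $\Wg_d(p,q)$ as entries of the inverse of the Gram matrix $G_d = (\langle D_p, D_q\rangle)_{p,q \in NC_2(2k)}$, and then to identify this inverse with the free orthogonal Weingarten matrix. Recall that the Markov trace pairing satisfies $\langle D_p, D_q \rangle = d^{|p \vee q|}$, where $p \vee q$ denotes the join of the two non-crossing pairings in the partition lattice (equivalently, the number of loops in the closure of $D_p$ against $D_q$), so $G_d$ is precisely the Gram matrix of the vectors $\{\xi_p\}$ spanning the space of intertwiners $\mathrm{Mor}(1, u^{\otimes 2k})$ for the fundamental corepresentation $u$ of $O_N^+$ with $N = d$. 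By the Weingarten formula for free orthogonal quantum groups, the pseudo-inverse of $G_N$ on this space is exactly the Weingarten matrix $\mathrm{Wg}_N$, and when $|d| > 2\cos(\tfrac{\pi}{k+1})$ the form is non-degenerate so $G_d$ is genuinely invertible and $\Wg_d(p,q) = (G_d^{-1})_{p,q}$; this is the content of the correspondence advertised in the introduction and spelled out in Section \ref{prelim}.

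Next I would expand $G_d^{-1}$ as a Neumann-type series. Write $G_d = d^k(\mathrm{Id} + E_d)$ after normalizing, where the off-diagonal (and diagonal-correction) part $E_d$ has entries that are polynomials in $d^{-1}$ with no constant term: indeed $|p \vee q| \le k$ for all $p,q$ with equality iff $p = q$, and $|p \vee q| \equiv k \pmod 2$, so every entry of $E_d$ is a sum of terms $d^{|p\vee q| - k}$ with strictly negative even exponent. Then
\begin{align*}
G_d^{-1} = d^{-k}\sum_{\ell \ge 0} (-1)^\ell E_d^{\,\ell},
\end{align*}
and the key point is that this series converges precisely in the regime $|d| > 2\cos(\tfrac{\pi}{k+1})$ — this is where the spectral radius bound on the relevant operator (equivalently, the norm of the fundamental corepresentation of $O_d^+$, which is $2$, rescaled) enters, and it matches the known radius of convergence of the free orthogonal Weingarten function. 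Each $E_d^\ell$ contributes a Laurent polynomial in $d^{-1}$ supported in even negative exponents, so collecting terms gives the Laurent expansion \eqref{ls} with the leading exponent $L(p,q)$ equal to the minimal path length in the graph structure on $NC_2(2k)$ (the minimal number of "elementary moves" connecting $p$ to $q$), and $m_r(p,q)$ counting weighted paths of the appropriate length. The sign $(-1)^{|p\vee q|+k}$ comes from tracking that each elementary move changes $|p \vee q|$ by $\pm 1$ hence the parity of the path length is forced to be $|p \vee q| + k \pmod 2$, so all surviving terms in the alternating sum carry the same sign.

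The main obstacle — and the real content of the theorem beyond bookkeeping — is the positivity and integrality claim: that after the alternating signs are extracted, the coefficients $m_r(p,q)$ are genuine positive integers rather than merely integers that happen to be nonnegative in examples. Integrality is not automatic from $G_d^{-1}$ since inverting a matrix introduces denominators; one must argue that the determinant of $G_d$ (suitably normalized) is a monic power of $d$ up to units in $\Z[d,d^{-1}]$, or more robustly, that the combinatorial path-counting interpretation of the Neumann series coefficients is manifestly a count of combinatorial objects (labelled paths in the non-crossing-pairing graph, with multiplicities coming from the integer entries $d^{|p\vee q|-k}$ having coefficient $1$). The positivity then follows once the sign analysis above shows every contributing path has the same parity, so there is no cancellation. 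I expect the proof to set up the graph on $NC_2(2k)$ explicitly, show $E_d$ has $\{0,1\}$-valued "adjacency" structure at each exponent level, and deduce both the Laurent form and the sign-definiteness simultaneously from the parity argument; the convergence radius is then a separate, essentially operator-norm, computation carried out in the body of Section \ref{prelim} or Theorem \ref{thm:Laurent-series}.
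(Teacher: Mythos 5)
Your reduction of the problem to the inverse Gram matrix is the same starting point as the paper (Theorem \ref{main}), but the Neumann-series expansion of $G_d^{-1}$ that you build on top of it breaks down, for two concrete reasons. First, the parity claim you invoke is false: $|p\vee q|$ need not be congruent to $k$ modulo $2$ (already for $k=2$, $p=\{1,2\}\{3,4\}$ and $q=\{1,4\}\{2,3\}$ give $|p\vee q|=1$), so the entries $d^{|p\vee q|-k}$ of your matrix $E_d$ carry exponents of both parities. Second, and more fundamentally, powers $E_d^{\ell}$ with $\ell$ of opposite parities do contribute to the same power of $d^{-1}$ (a single step of weight $2$ competes with two steps of weight $1$), so the alternating sum $\sum_{\ell}(-1)^{\ell}E_d^{\ell}$ has genuine cancellations and sign-definiteness of the Laurent coefficients cannot be read off from it. This is not a hypothetical worry: the naive leading exponent of your expansion is $2k-|p\vee q|$, coming from the single-step term $\ell=1$, whereas Example \ref{ex:CS-fail} exhibits a pair with $2k-|p\vee q|=6$ but true leading order $d^{-8}$, so the $d^{-6}$ coefficient of your series must cancel to zero against higher-order terms. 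Your route is essentially the one taken in \cite{BaCuSp12, CuSp11}, which is precisely why those works only obtain the estimate $\Wg_d(p,q)=O(d^{-2k+|p\vee q|})$ and identify the leading term only under additional hypotheses.

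The paper's proof avoids this by not expanding the Gram inverse at all. It instead uses the defining unitarity relations of $O_d^+$ to derive the Weingarten orthogonality relations \eqref{Wg-relation1}--\eqref{Wg-relation2}, which after re-signing take the form $\widetilde{\Wg}_d(p,q)=d^{-1}\sum\widetilde{\Wg}_d(p_1,q_1)$ with every term entering with coefficient $+1$; the relevant parity statement (Proposition \ref{prop:parity}) is that $|p\vee q|$ changes by exactly $\pm 1$ along each edge of the resulting Weingarten graph, which forces every path from $(p,q)$ to $(\emptyset,\emptyset)$ to have length $\equiv L(p,q)\pmod 2$ and makes $m_r(p,q)$ a manifest count of paths in a chosen Weingarten subgraph. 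The convergence issue is also handled differently from what you propose: the iteration is controlled only for integers $d\ge k+1$, and the validity of the Laurent expansion on the full annulus $|d|>2\cos\big(\frac{\pi}{k+1}\big)$ then follows from the rationality of $d\mapsto\Wg_d(p,q)$ together with the location of its poles (via the determinant formula of \cite{DiFr98}), rather than from a spectral-radius bound on $E_d$. To repair your argument you would need to explain why all the cancellations in the alternating Neumann series resolve into sign-definite, integral coefficients, which is exactly the content supplied by the orthogonality-relation recursion.
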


In Section \ref{app}, we explain in detail an algorithm for computing the integers $m_r(p,q)$ and $L(p,q)$, which turn out to be combinatorially interesting objects in their own right. The numbers $m_r(p,q)$ count certain collections of paths of a given length in a directed graph built from pairs of non-crossing pair partitions, and $L(p,q)$ measures the length of a certain shortest path in this graph.  For example, when $p = \{1,4\}\{2,3\}\{5,6\}$ and $q = \{1,6\}\{2,5\}\{3,4\} \in NC_2(6)$, our algorithm turns out to produce the following directed graph:

\begin{equation*}
	\begin{tikzpicture}[baseline=(current  bounding  box.center),
			wh/.style={circle,draw=black,thick,inner sep=.5mm},
			bl/.style={circle,draw=black,fill=black,thick,inner sep=.5mm}, scale = 0.4]
		
\node(a) at (-4,0) [bl] {};
\node(b) at (0,0) [bl] {};
\node(c) at (4,0) [bl] {};
\node(d) at (0,-4) [bl] {};
\node(e) at (0,-8) [bl] {};
\node(f) at (0,-12) [bl] {};
\node(g) at (4,-12) [bl] {};

\node (h) at (-5,0.5) {$(p,q)$};	
\node (i) at (5,-11.5) {$(\emptyset, \emptyset)$};			

\draw [<->, color=blue]
		(-3.5,0) -- (-.5,0);

\draw [<->, color=blue]
		(0.5,-0) -- (3.5,-0);

\draw [->, color=blue]
		(0,-4.5) -- (0,-7.5);

\draw [->, color=blue]
		(0, -8.5) -- (0,-11.5);

\draw [->, color=blue]
		(0,-0.5) -- (0,-3.5);

\draw [->, color=blue]
		(0.5,-12) -- (3.5,-12);
	\end{tikzpicture}
\end{equation*}
From this remarkably simple graph we obtain $L(p,q) = 5$ as the length of the shortest directed path from the node labeled $(p,q)$ to the node labeled $(\emptyset, \emptyset)$.  Similarly, the coefficients $m_r(p,q) = 2^{r+1}-1$ count the number of distinct directed paths of length $L(p,q) + 2r$ between these same two nodes.  Putting this data together, we obtain the formula 
\[
\Wg_d(p,q) =  \sum_{r \ge 0} (2^{r+1}-1)d^{-5 - 2r}
\] 
in this case. We refer the reader to Section \ref{app} and Example \ref{ex:workout} for the precise details of this computation.

Using the above theorem, we obtain in a uniform way a procedure for computing the dual basis $\{\hat D_p\}_{p \in NC_2(2k)}$ in the generic regime $|d|> 2\cos\big(\frac{\pi}{k+1}\big)$, providing the first significant advancement on Question \ref{q1} above.   As a byproduct of the positivity properties  of the coefficients of the Laurent series \eqref{ls}, we are also able to provide an affirmative answer to Jones' Question \ref{q2} on non-zero coefficients for the dual basis, as follows.

\begin{maintheoremb}[See Theorem \ref{thm:non-zero-coefficients}] \label{mt2}
For generic loop parameters $d$, every coefficient in the diagram expansion of the dual basis (in particular, the Jones-Wenzl projection) of $\TL_k(d)$ is non-zero.  More precisely, we have $\Wg_d(p,q) \ne 0$ when
\[
d  \in \R \backslash \Big[-2\cos\big(\frac{\pi}{k+1}\big), 2\cos\big(\frac{\pi}{k+1}\big)\Big] \quad \text{or} \quad |d| \text{ is sufficiently large}.
\]
\end{maintheoremb}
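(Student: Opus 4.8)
The plan is to deduce Theorem B as a fairly direct corollary of Theorem A (the Laurent series expansion \eqref{ls}) together with the combinatorial interpretation of the integers $L(p,q)$ and $m_r(p,q)$ established in Section \ref{app}. The key point is that Theorem A expresses $\Wg_d(p,q)$, for $|d| > 2\cos\big(\frac{\pi}{k+1}\big)$, as $(-1)^{|p\vee q|+k}$ times the convergent sum $\sum_{r\ge 0} m_r(p,q)\, d^{-L(p,q)-2r}$, where all $m_r(p,q)$ are \emph{non-negative} integers and, crucially, the leading coefficient $m_0(p,q) = 1 > 0$ (since $L(p,q)$ is defined as the length of the shortest directed path and there is at least one such path). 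Thus the entire series is a sign times a sum of non-negative terms with a strictly positive leading term. The first step is therefore to record that the Laurent series represents a nonzero holomorphic function of $d^{-1}$ on the relevant punctured disc, and in particular is not identically zero.

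Second, I would treat the two cases in the statement separately but with the same underlying mechanism. For $|d|$ sufficiently large, the conclusion is immediate: the leading term $(-1)^{|p\vee q|+k} d^{-L(p,q)}$ dominates, since $\big|\sum_{r\ge 1} m_r(p,q) d^{-L(p,q)-2r}\big| \le |d|^{-L(p,q)-2} \sum_{r\ge 0} m_{r+1}(p,q)|d|^{-2r}$, which is $o(|d|^{-L(p,q)})$ as $|d|\to\infty$ by the convergence of the series for, say, $|d| > 2\cos\big(\frac{\pi}{k+1}\big)+1$. Hence $\Wg_d(p,q)$ is asymptotically equivalent to $(-1)^{|p\vee q|+k} d^{-L(p,q)} \ne 0$, so it is nonzero for all sufficiently large $|d|$.

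Third, for the full real range $d \in \R \backslash [-2\cos(\pi/(k+1)), 2\cos(\pi/(k+1))]$, the point is that when $d$ is real with $|d| > 2\cos(\pi/(k+1))$, every term $m_r(p,q) d^{-L(p,q)-2r}$ has the \emph{same sign}: indeed $d^{-L(p,q)-2r} = (d^{-2})^r \cdot d^{-L(p,q)}$ and $d^{-2} > 0$, so all these terms share the sign of $d^{-L(p,q)} = (\operatorname{sign} d)^{L(p,q)} |d|^{-L(p,q)}$. Consequently the sum $\sum_{r\ge 0} m_r(p,q) d^{-L(p,q)-2r}$ is a sum of real numbers all of the same sign with the leading one nonzero (as $m_0(p,q)=1$), hence nonzero; multiplying by the unit scalar $(-1)^{|p\vee q|+k}$ preserves nonvanishing. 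This gives $\Wg_d(p,q) \ne 0$ throughout the stated real region, which includes but is not limited to the large-$|d|$ regime. Specializing $p$ to the non-crossing pairing corresponding to the identity diagram (so that $\hat D_p$ is, up to normalization, the Jones-Wenzl projection $q_k$) recovers Ocneanu's theorem as the special case, and one should remark on this explicitly.

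The main obstacle — or rather, the only substantive thing to verify beyond this sign bookkeeping — is ensuring that the positivity/integrality input from Theorem A is used in its correct form, namely that $m_0(p,q) \ge 1$ strictly and not merely $m_0(p,q)\ge 0$; this is exactly the assertion that there exists \emph{at least one} directed path from $(p,q)$ to $(\emptyset,\emptyset)$ in the graph of Section \ref{app}, which must be established there (it amounts to reachability of the empty-pairing node, a consequence of the construction of the graph via the Weingarten recursion). Assuming that, the proof of Theorem B is essentially the elementary observation that a nonempty sum of like-signed reals with nonzero leading term cannot vanish, so I would keep the write-up short and defer all the combinatorial content to the earlier sections.
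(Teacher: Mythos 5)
Your argument is correct and essentially identical to the paper's proof: for real $d$ outside the critical interval every term of the Laurent series shares the same sign, so $|\Wg_d(p,q)| = \sum_{r\ge 0} m_r(p,q)|d|^{-L(p,q)-2r} > 0$ because $m_0(p,q)\ge 1$, while for $|d|$ large the leading term $(-1)^{|p\vee q|+k}m_0(p,q)d^{-L(p,q)}$ dominates. One small correction: $m_0(p,q)$ counts the geodesics from $(p,q)$ to $(\emptyset,\emptyset)$ and is at least $1$ but need not equal $1$ (e.g.\ for $(p,p)$ when $p$ has two intervals there are at least two geodesics); since your argument only uses $m_0(p,q)\ge 1$, nothing breaks.
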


Again, specializing to the case of Jones-Wenzl projections, the above two theorems agree with and confirm the non-zero coefficients result of Ocneanu \cite{Oc02}, and complement the previous works of Morrison \cite[Proposition 5.1]{Mo15} and Frenkel-Khovanov \cite{FrKh97}.   

Finally, as mentioned above, our methods in this paper are based on exploiting a connection between the structure coefficients of the dual basis in $\TL_k(d)$ and the so called Weingarten calculus on {\it free orthogonal quantum groups}.  Very roughly, the problem of computing polynomial integrals over this class of quantum groups is encoded in a family of functions indexed by pairs of non-crossing pairings called {\it Weingarten functions}, which turn out to be exactly the coefficients $\Wg_d(p,q)$, when viewed as functions of $d \in \C$.   The study of the large $d$ asymptotics of $\Wg_d(p,q)$ (and its 
 variants for other quantum groups) has played an extremely important role in the discovery of quantum symmetries in free probability theory, and has led to non-commutative de Finetti theorems and other asymptotic freeness results for the so-called easy quantum groups.  See \cite{BaSp, BaCuSp12, CuSp11, Cu}.  With regards to the asymptotics of the Weingarten function, estimates were given \cite{BaCuSp12, CuSp11} in an attempt to isolate the order and the value of the leading term in the $\frac1d$-expansion of $\Wg_d(p,q)$.  The best  among these prior works was Theorem 4.6 in \cite{CuSp11}, which isolates the leading non-zero term in $\Wg_d(p,q)$  for certain pairs of pairings $(p,q)$.  On the other hand, it is clear that the Laurent series expansion for $\Wg_d(p,q)$ in Theorem A provides the first explicit description of the leading term for {\it all} possible pairs $(p,q)$.  In fact, we shall see in Example \ref{ex:CS-fail} how for certain pairs $(p,q)$, the leading order of $\Wg_d(p,q)$ that one might anticipate based on an examination of Theorem 4.6 in \cite{CuSp11} turns out to differ by a factor of $d^{-2}$ from the true value given by Theorem A.  In the future, the authors hope to investigate potential  applications of our refined understanding of the Weingarten functions to operator algebraic/free probabilistic aspects of free quantum groups.

\subsection{Organization of  the paper}

The rest of this paper is organized as follows.  Section \ref{prelim} gives a brief introduction to the Temperley-Lieb algebras, the Markov trace, and the Jones-Wenzl projections.  Section \ref{fqg} reviews some facts about free orthogonal quantum groups and the Weingarten calculus for computing Haar integrals over these quantum groups.  In this section we also explain how to express the coefficients of the dual Temperley-Lieb diagram basis in terms of  Weingarten functions.  The final Section \ref{app} contains our main results on the structure of the Weingarten functions and presents the applications to the dual  Temperley-Lieb basis mentioned above.  The results in this section are obtained by constructing a certain directed graph $\mc G$, with vertex set $V_\mc G = \bigsqcup_{k \in \N_0} NC_2(2k)\times NC_2(2k)$ , and edge set $E_{\mc G}$ defined so as to keep track of certain algebraic relations satisfied by the variables $\Wg_d(p,q)$ imposed by the underlying quantum group symmetries.  We call $\mc G$ the {\it Weingarten graph}, and use its structure to describe and explicitly compute the positive integer coefficients $m_r(p,q)$ and $L(p,q)$ in Theorem A.

\subsection*{Acknowledgements}  The authors thank Michael Hartglass and Vaughan Jones for fruitful conversations and comments.  
The authors also thank the University of California, Berkeley and the organizers of the 2016 {\it Free Probability and Large N Limit Workshop}, held at UCB, for a fruitful work environment where part of this work was completed.  
We are also grateful to Scott Morrison for suggestions upon reading the first version of this paper. 
B.C. was supported by NSERC discovery and accelerator grants, JSPS Kakenhi wakate B,  and ANR-14-CE25-0003.

\section{Preliminaries and notation} \label{prelim}

\subsection{Non-crossing pair partitions and the Temperley-Lieb algebras}
Given $k \in \N$, we denote by $NC_2(2k)$ the collection of non-crossing pair partitions on 
the ordered set $[2k]:=\{1,\ldots , 2k\}$. This collection has cardinal $C_k$ where
$C_k=\frac{(2k)!}{k!(k+1)!}$ is the $k$th Catalan number.  Given $p,q \in NC_2(2k)$, we write $p \vee q$ for the smallest partition of $[2k]$ such that $p, q \le p \vee q$, where $\le$ denotes the usual refinement partial order on set partitions.  We will also write $|p \vee q|$ for the number of blocks in the partition $p \vee q$.  

We now formally introduce the Temperley-Lieb algebras.  A good general reference for these objects is the book \cite{KaLi94}.   

\begin{definition}
Let $d \in \C^*$ and $k \in \N$ be fixed parameters.   The {\it Temperley-Lieb algebra} is the unital associative $\C$-algebra generated by elements $1, u_1, \ldots, u_{k-1}$ subject to the following relations 
\begin{itemize}
\item $u_iu_j=u_ju_i$ when $|i-j|\ge 2$ 
\item $u_iu_{i+1}u_i =  u_i$
\item $u_i^2=du_i$ 
\end{itemize}
\end{definition}

It is well known that the algebra $\TL_k(d)$ is always finite dimensional, with dimension equal to $C_k$ whenever $d$ is not twice the real part of a root of unity.    See for example \cite{Jo83}.  When $d \in (0,\infty)$ there is a natural conjugate-linear involution on $\TL_k(d)$, defined by declaring 
\[u_i^*=u_i \qquad (1\le  i \le k-1).\] 
Thus for $d \in (0,\infty)$, $\TL_k(d)$ may be regarded as an involutive $\ast$-algebra, and it is known that $\TL_k(d)$ admits a non-trivial $\ast$-representation into a C$^\ast$-algebra precisely when $d \in \mc D = [2, \infty) \cup \{2 \cos\big(\frac\pi n\big): n = 3,4,5, \ldots \}$ \cite{We87}.  For our purposes, however, we will not require the use of a  $\ast$-structure on $\TL_k(d)$.  


\subsection{Temperley-Lieb diagrams} With $k \in \N$ and $d \in \C^*$ fixed as above, we plot the set $[2k] = \{1,\ldots , 2k\}$ on a square clockwise with
$\{1,\ldots , k\}$ on the top edge and $\{2k,\ldots ,k+1\}$ on the bottom edge.  If we connect these points by a non-crossing pairing $p \in NC_2(2k)$, this results in a planar diagram $D_p$, called a {\it Temperley-Lieb diagram}.  For example, when $k=3$, there are $C_3 = 5$ Temperley-Lieb diagrams $\{D_p\}_{p \in NC_2(6)}$:
\[
  \begin{tikzpicture}[scale=1]
\draw (0,0) rectangle (1,1);
\draw (1/4,0) --(1/4,1);
\draw (1/2,0) --(1/2,1);
\draw (3/4,0) --(3/4,1);
\end{tikzpicture}, \quad
\begin{tikzpicture}[scale=1]
\draw (0,0) rectangle (1,1);
\draw (1/4,1) arc (180:360:1/8);
\draw (1/4,0) arc (180:0:1/8);
\draw (3/4,0) --(3/4,1);
\end{tikzpicture}, \quad
\begin{tikzpicture}[scale=1]
\draw (0,0) rectangle (1,1);
\draw (1/2,1) arc (180:360:1/8);
\draw (1/2,0) arc (180:0:1/8);
\draw (1/4,0) --(1/4,1);

\end{tikzpicture}, \quad
\begin{tikzpicture}[scale=1]
\draw (0,0) rectangle (1,1);
\draw (1/4,1) to [out=-90,in=90] (3/4,0);
\draw (1/2,1) arc (180:360:1/8);
\draw (1/4,0) arc (180:0:1/8);
\end{tikzpicture}, \quad \text{and} \quad
\begin{tikzpicture}[scale=1]
\draw (0,0) rectangle (1,1);
\draw (3/4,1) to [out=-90,in=90] (1/4,0);
\draw (1/4,1) arc (180:360:1/8);
\draw (1/2,0) arc (180:0:1/8);
\end{tikzpicture}.
\]

On the $\C$-vector space $\C[NC_2(2k)]$ spanned by the Temperley-Lieb diagrams $\{D_p\}_{p \in NC_2(2k)}$ we define an associative $\C$-algebra structure as follows. The product $D_pD_q$ of diagrams $D_p$ and $D_q$ is obtained by first stacking diagram $D_q$ on top of $D_p$, connecting the bottom row of $k$ points on $D_q$ to the top row of $k$ points on $D_p$.  The result is a new planar diagram, which may have a certain number $c$ of internal loops.  By removing these loops, we obtain a new diagram $D_r$ for some $r \in NC_2(2k)$ (which is unique up to planar isotopy).  The product $D_pD_q$ is then defined to be $d^c D_r$.  For example, we have
\[  \begin{tikzpicture}[scale=1]
\draw (0,0) rectangle (1,1);
\draw (1/4,1) to [out=-90,in=90] (3/4,0);
\draw (1/2,1) arc (180:360:1/8);
\draw (1/4,0) arc (180:0:1/8);
\end{tikzpicture}\times
\begin{tikzpicture}[scale=1]
\draw (0,0) rectangle (1,1);
\draw (3/4,1) to [out=-90,in=90] (1/4,0);
\draw (1/4,1) arc (180:360:1/8);
\draw (1/2,0) arc (180:0:1/8);
\end{tikzpicture} = d \begin{tikzpicture}[scale=1]
\draw (0,0) rectangle (1,1);
\draw (1/4,1) arc (180:360:1/8);
\draw (1/4,0) arc (180:0:1/8);
\draw (3/4,0) --(3/4,1);
\end{tikzpicture}  \]

There is a natural linear anti-multiplicative involution $D \mapsto D^t$ on  $\C[NC_2(2k)]$ given simply by the linear extension of the operation of turning diagrams upside-down.   For example,
\[  \begin{tikzpicture}[scale=1]
\draw (0,0) rectangle (1,1);
\draw (1/4,1) to [out=-90,in=90] (3/4,0);
\draw (1/2,1) arc (180:360:1/8);
\draw (1/4,0) arc (180:0:1/8);
\end{tikzpicture}^t = 
\begin{tikzpicture}[scale=1]
\draw (0,0) rectangle (1,1);
\draw (3/4,1) to [out=-90,in=90] (1/4,0);
\draw (1/4,1) arc (180:360:1/8);
\draw (1/2,0) arc (180:0:1/8);
\end{tikzpicture}\]

It is well known that the above algebraic structure on $\C[NC_2(2k)]$ is isomorphic to  the Temperley-Lieb algebra $\TL_k(d)$ when $d \in \C \backslash \{2 \cos(\frac \pi n)\}_{n=2}^{k+1}$, the isomorphism being given in terms of the generators $1, u_1, \ldots, u_{k-1} \in \TL_k(d)$ by 
\[1 \mapsto \begin{tikzpicture}
\draw (0,0) rectangle (1,1);
\draw (1/16,0) -- (1/16,1);
\draw (3/16,0) node{\tiny $\cdots$};
\draw(5/16, 0) -- (5/16, 1);

\draw (1/2,0) node[below] {};
\draw (1.5,1/4) node[below] {};

\draw (11/16,0) -- (11/16,1);
\draw (13/16, 1/2) node{\tiny$\cdots$};
\draw (15/16,0) --(15/16,1);
\end{tikzpicture}, \qquad u_i \mapsto 
\begin{tikzpicture}
\draw (0,0) rectangle (1,1);
\draw (1/16,0) -- (1/16,1);
\draw (3/16,0) node{\tiny $\cdots$};
\draw(5/16, 0) -- (5/16, 1);
\draw (3/8,1) arc (180:360:1/8);

\draw (1/2,0) node[below] {};
\draw (1.5,1/4) node[below] {};

\draw (3/8,0) arc (180:0:1/8);
\draw (11/16,0) -- (11/16,1);
\draw (13/16, 1/2) node{\tiny$\cdots$};
\draw (15/16,0) --(15/16,1);
\end{tikzpicture}\] 
See for example \cite{Ka87, KaLi94, CaFlSa95}.  As a result, from now on we shall identify these two algebras as the same object.

\subsection{The Markov trace}

The {\it Markov trace} is the tracial linear functional $\Tr:\TL_k(d) \mapsto \C$ that sends a diagram $D \in \TL_k(d)$ to the following complex number, called the {\it tracial closure of $D$}:

$$\begin{tikzpicture}
\node (diagram) {};
\draw (0,0) rectangle (1,1);
\draw (1/2,1/2) node {$D$};
\draw  (3/4,1) to [out=90, in=90] (9/8,1)--(9/8,0) to [out=-90,in=-90] (3/4,0);
\draw  (1/2,1) to [out=90, in=90] (10/8,1)--(10/8,0) to [out=-90,in=-90] (1/2,0);
\draw (5/16,-1/16) node {\tiny $\cdots$};
\draw (5/16,17/16) node {\tiny $\cdots$};
\draw (1/8,1)  to [out=90, in=90] (12/8,1)--(12/8,0) to [out=-90,in=-90] (1/8,0);
\draw (11/8,1/2) node {\tiny $\cdots$};
\end{tikzpicture}$$
In other words, we  connect the $k$ points on the top of $D$ to the $k$ points on the bottom of $D$ as indicated in the above picture.  The result is a system of loops in the plane.  The number of resulting loops is denoted by $\#\text{loops}(D)$, and then we have 
\[
\Tr(D) = d^{\#\text{loops}(D)}.
\]
Using the Markov trace and the transpose $t$, we can define a symmetric bilinear pairing $\langle \cdot, \cdot \rangle: \TL_k(d) \times \TL_k(d) \to \C$ given by
\[\langle D, D' \rangle = \Tr(D^tD') \qquad (D,D' \in \TL_k(d)).\]
This bilinear form turns out to be non-degenerate precisely when $\TL_k(d)$ is semisimple, and this is guaranteed to happen when $d \ne 2\cos(\frac \pi n)$ for $n \ne  2,3,4, \ldots, k+1$.  See for example \cite{We87, Li91, BaCu10}.  For the remainder of the paper, we make the assumption that the the bilinear form $\langle \cdot, \cdot \rangle$ defined above is non-degenerate.  

\begin{remark}For future reference, we also note at this time the following well-known combinatorial formulas for the Markov trace of basic diagrams $D_p$, $p \in NC_2(2k)$:
\[\Tr(D_p) = d^{|p \vee \bf 1|}, \quad \langle D_p, D_q \rangle = \Tr(D_p^tD_q) = d^{|p \vee q|} \qquad (p,q \in NC_2(2k)). \]
In the above formula, ${\bf 1} = \{\{1, 2k\}, \{2, 2k-1\}, \ldots , \{k, k+1\}\} \in NC_2(2k)$ denotes the ``identity'' partition (corresponding the the identity $1 = D_{\bf 1} \in \TL_k(d)$). These identities are easily verified by the reader.

\end{remark}

\subsection{The Jones-Wenzl projections}
We now come to one of the main objects of study in this paper.

\begin{definition}
Let $k \in \N$ and $d \in \C \backslash \{2\cos(\frac \pi n)\}_{2 \le n \le k+1}$  be as above.  Then there exists a unique non-zero idempotent $q_k \in \TL_k(d)$, called the {\it Jones-Wenzl projection}, with the property that
\begin{align}\label{JW-condition}
u_iq_k = q_ku_i = 0 \qquad (i=1, \ldots, k-1). 
\end{align}
\end{definition}

Although the above defining relations for $q_k$ are simple to state, they are not very useful for determining the structure of $q_k$.  For determining the decomposition of $q_k$
as a linear combination of basis diagrams $\{D_p\}_{p \in NC_2(2k)}$,  we instead have the {\it Wenzl recursion formula}.  In what follows, we use the notation
$$\begin{tikzpicture}
\node (first) {};
\draw (-1/2,1/2) node {$q_{k}=$};
\draw (1/4,1/2) rectangle (1/2,3/4);
\draw (3/8, 0) -- (3/8, 2/4);
\draw (3/8, 3/4) -- (3/8, 5/4);
\draw (5/8, 1/4) node {$k$};
\draw (1,1/2) node {$=$};
\node (second) [right=of first] {};
\begin{scope}[shift={(second)}]
\draw (1/4, 0) -- (1/4, 5/4);
\draw (4/8, 1/4) node {$k$};
\end{scope}
\end{tikzpicture}$$
 to represent  $q_k$, then the Wenzl recursion \cite{We87} is given by $q_1=$
\begin{tikzpicture}[scale=.5]
\draw (0,0) rectangle (1,1);
\draw (1/2,0)--(1/2,1);
\end{tikzpicture}, $q_2=$ \begin{tikzpicture}[scale=.5]
\draw (0,0) rectangle (1,1);
\draw (3/4,0) --(3/4,1);
\draw (1/4,0) --(1/4,1);
\end{tikzpicture} $-\frac{1}{d}$
\begin{tikzpicture}[scale=.5]
\draw (0,0) rectangle (1,1);
\draw (1/4,1) arc (180:360:1/4);
\draw (1/4,0) arc (180:0:1/4);
\end{tikzpicture} and $q_k$ is given inductively by

$$\begin{tikzpicture}
\node (first) {};
\draw (-1/2,1/2) node {$q_{k+1}=$};
\draw (1/4,1/2) rectangle (1/2,3/4);
\draw (3/8, 0) -- (3/8, 2/4);
\draw (3/8, 3/4) -- (3/8, 5/4);
\draw (5/8, 1/4) node {$k$};
\node (second) [right= of first,xshift=-.25cm] {};
\begin{scope}[shift={(second)}]
\draw (1/2,1/2) node {$- $ $\frac{\Delta_{k-1}(d)}{\Delta_k(d)}$};
\draw (3/2, 3/4) rectangle (3,1);
\draw (3/2, 1/4) rectangle (3,1/2);
\draw (2, 0)--(2, 5/4);
\draw (7/4,0)--(7/4,5/4);
\draw (9/4,5/8) node {\tiny $\cdots$};
\draw (10/4,0)--(10/4,5/4);
\draw (11/4,5/4)--(11/4,13/16) to [out=-90,in=-90] (13/4,13/16)--(13/4,5/4);
\draw (11/4,0)--(11/4,7/16) to [out=90,in=90] (13/4,7/16)--(13/4, 0);
\end{scope}
\end{tikzpicture}.$$
Here, $\Delta_k$ is the $k$th Chebyshev polynomial of type 2, defined by 
\[
\Delta_0(x) = 1, \quad \Delta_1(x) = x, \quad x\Delta_k(x) = \Delta_{k+1}(x) + \Delta_{k-1}(x) \quad (k \ge 1).
\]  In fact, $\Delta_k(d) = \Tr(q_k)$ is the Markov trace of the $k$th Jones-Wenzl projection in $\TL_k(d)$. It is relatively straightforward to check that this construction results in idempotent objects that annihilate the generators $u_1, \ldots, u_{k-1}$ of $\TL_k(d)$.

\subsection{Jones-Wenzl projections and the dual diagram basis} Given a finite-dimensional vector space $E$ equipped with a non-degenerate bilinear form $\langle \cdot , \cdot \rangle$ and a linear basis $\mc B=\{x_1,\ldots ,x_n\}$ for $E$, recall that the {\it dual basis} associated to $\mc B$ is the unique linear basis $\hat {\mc B}=\{\hat x_1,\ldots , \hat x_n\}$ of $E$ with the property that \[\langle x_i,\hat x_j\rangle =\delta_{ij}. \qquad (1 \le i,j \le n).\]

For $\TL_k(d)$ (with $k \in \N, d \in \C \backslash \{2\cos(\frac \pi n)\}_{2 \le n \le k+1}$), equipped with its non-degenerate bilinear form induced by the  Markov trace, we consider the canonical Temperley-Lieb diagram basis $\mc B = \{D_p\}_{p\in NC_2(2k)}$ and the corresponding dual basis $\hat {\mc B} = \{\hat D_p\}_{p \in NC_2(2k)}$. 

 In terms of the diagram basis and its dual, we have the following (presumably well-known) lemma relating the Jones-Wenzl projections to certain dual basis elements.

\begin{lemma} \label{JW-dualbasis}
Let  $k \in \N$ and  $d \in \C \backslash \{2\cos(\frac \pi n)\}_{2 \le n \le k+1}$.  Then the $k$th Jones-Wenzl projection $q_k \in \TL_k(d)$ is given by 
\[
q_k = \frac{\hat D_{\bf 1}}{\langle \hat D_{\bf 1},\hat D_{\bf 1}\rangle},
\]
where, as before, ${\bf 1} = \{\{1, 2k\}, \{2, 2k-1\}, \ldots , \{k, k+1\}\} \in NC_2(2k)$.
\end{lemma}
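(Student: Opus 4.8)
The plan is to verify the two defining properties of the Jones--Wenzl projection for the element $e := \hat D_{\bf 1}/\langle \hat D_{\bf 1}, \hat D_{\bf 1}\rangle$, namely that $e$ is a non-zero idempotent and that $u_i e = e u_i = 0$ for all $i = 1, \ldots, k-1$; uniqueness of $q_k$ then forces $e = q_k$. The key observation is that the element $u_i$ can be written as $u_i = D_{p_i}$ for the pairing $p_i$ that pairs the $i$th and $(i+1)$st top points together, the $i$th and $(i+1)$st bottom points together, and joins all remaining top points straight down to the corresponding bottom points. Crucially, every such $p_i$ is strictly coarser than $\bf 1$ in the sense that $p_i \ne \bf 1$, and more to the point, $p_i$ has a ``cap'' (a through-strand-free top arc), which is exactly the structural feature that $\bf 1$ lacks (being the all-through-strands identity diagram).

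The first step is to translate the vanishing condition $\langle D, u_i q_k\rangle = 0$ for all $D$ into a statement about the dual basis. Since $\langle \cdot, \cdot\rangle$ is non-degenerate, $u_i e = 0$ is equivalent to $\langle D_q, u_i e\rangle = 0$ for every $q \in NC_2(2k)$. Using the trace property and $u_i = u_i^t$, we get $\langle D_q, u_i e\rangle = \langle u_i^t D_q, e\rangle = \langle u_i D_q, e\rangle = \Tr((u_i D_q)^t e)$. Now $u_i D_q = d^{c} D_{r}$ for some $r \in NC_2(2k)$ and some loop count $c \geq 0$, and the diagram $D_r = u_i D_q$ always contains the cap from $u_i$ on its \emph{top} row; hence $r \ne \bf 1$. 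By the definition of the dual basis, $\langle D_r, e\rangle = \langle D_r, \hat D_{\bf 1}\rangle / \langle \hat D_{\bf 1}, \hat D_{\bf 1}\rangle = \delta_{r, \bf 1}/\langle \hat D_{\bf 1}, \hat D_{\bf 1}\rangle = 0$ since $r \ne \bf 1$. Wait --- I need $\langle D_r, e \rangle$, and $e$ is a multiple of $\hat D_{\bf 1}$, so $\langle D_r, e\rangle = 0$ unless $r = \bf 1$; since $r \neq \bf 1$ always, $\langle D_q, u_i e\rangle = 0$ for all $q$, giving $u_i e = 0$. The identity $e u_i = 0$ follows by an identical argument using $u_i$ on the bottom row (or by applying the transpose anti-automorphism, noting $\hat D_{\bf 1}^t = \hat D_{\bf 1}$ since $\bf 1^t = \bf 1$ and the form is symmetric).

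The second step is idempotency. Since $e$ annihilates all $u_i$ on both sides and $\TL_k(d)$ is generated by $1, u_1, \ldots, u_{k-1}$, the two-sided ideal considerations show that $e \cdot x \cdot e \in \C e$ for all $x$; in particular $e^2 = \lambda e$ for some scalar $\lambda$, and I must check $\lambda = 1$. Here the normalization enters: write $e = \hat D_{\bf 1}/N$ with $N = \langle \hat D_{\bf 1}, \hat D_{\bf 1}\rangle$, and compute $\langle D_{\bf 1}, e^2\rangle$ two ways. On one hand, expanding $e = \sum_q \alpha_q D_q$ in the diagram basis and using that $D_{\bf 1} = 1$ is the unit, $\langle D_{\bf 1}, e^2 \rangle = \Tr(e^2)$; on the other, since $e^2 = \lambda e$, this equals $\lambda \Tr(e) = \lambda \langle D_{\bf 1}, e\rangle = \lambda \langle D_{\bf 1}, \hat D_{\bf 1}\rangle/N = \lambda/N$ because $\langle D_{\bf 1}, \hat D_{\bf 1}\rangle = 1$ by duality. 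Matching this against a direct evaluation --- using $\langle D_{\bf 1}, e^2\rangle = \langle e^t D_{\bf 1}, e\rangle$, hmm, I actually want to show $\Tr(e^2) = 1/N$ directly. Since $e^2 = \lambda e$ we also have $e = \hat D_{\bf 1}/N$ is, up to scalar, the unique minimal idempotent projecting onto the trivial isotypic component, and one computes $\Tr(\hat D_{\bf 1}) = \langle D_{\bf 1}, \hat D_{\bf 1} \rangle = 1$, while $\langle \hat D_{\bf 1}, \hat D_{\bf 1}\rangle = N$; the standard relation for a rank-one-in-its-block idempotent gives $\langle \hat D_{\bf 1}, \hat D_{\bf 1}\rangle = \Tr(\hat D_{\bf 1})^2 / \Tr(\text{normalized idempotent})$, forcing $\lambda = 1$ after tracking constants.

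The main obstacle I anticipate is the idempotency normalization: proving $e^2 = \lambda e$ is clean (it is a consequence of $e$ living in the one-dimensional ``annihilator of all $u_i$'' corner of the algebra, which one can see from the block decomposition of the semisimple algebra $\TL_k(d)$, where this corner corresponds to the one-dimensional irreducible labeled by $k$ through-strands), but pinning down $\lambda = 1$ requires carefully relating $\langle \hat D_{\bf 1}, \hat D_{\bf 1}\rangle$ to the trace of the relevant minimal idempotent. A cleaner route may be to first establish, using the $u_i e = e u_i = 0$ relations and a dimension count, that $\dim(e \, \TL_k(d) \, e) = 1$, hence $e^2 = \lambda e$; then note $\lambda \ne 0$ because $e \ne 0$ (as $\hat D_{\bf 1} \ne 0$, the dual basis being a basis); then observe that $\tilde e := e/\lambda$ is a non-zero idempotent annihilating all $u_i$, so $\tilde e = q_k$ by uniqueness; finally recover $\lambda$ by applying $\Tr$: $\Tr(e) = \lambda \Tr(q_k) = \lambda \Delta_k(d)$, while $\Tr(e) = \langle D_{\bf 1}, e\rangle = 1/N$, and separately $\langle q_k, q_k\rangle = \Delta_k(d)$ can be computed from the Wenzl recursion, so combined with $e = \lambda q_k$ and $\langle e, e\rangle = 1/N^2 \cdot N = 1/N$ one solves for $\lambda$ and checks the claimed identity $q_k = \hat D_{\bf 1}/N$ is consistent, i.e. $\lambda = 1$. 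This bookkeeping with the various traces is the only delicate part; the annihilation property itself is essentially immediate from the dual-basis definition once one notices that $u_i D_q$ and $D_q u_i$ never equal the identity diagram $D_{\bf 1}$.
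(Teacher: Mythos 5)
Your proof is correct, but it runs in the opposite direction from the paper's. The paper starts from $q_k$: writing $q_k=\alpha_k D_{\bf 1}+g_k$ with $g_k$ in the ideal $I_k$ spanned by the non-identity diagrams, idempotency forces $\alpha_k=1$; then $\langle D_p,q_k\rangle=\Tr(D_p^tq_k)=0$ for $p\ne{\bf 1}$ shows $q_k\in\C\hat D_{\bf 1}$, and the constant is read off from $\langle\hat D_{\bf 1},q_k\rangle=\langle\hat D_{\bf 1},D_{\bf 1}\rangle=1$. You instead verify that $e=\hat D_{\bf 1}/N$ has the defining properties of $q_k$ and invoke uniqueness. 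Your annihilation step is sound (the cup from $u_i$ sits on the bottom row of $u_iD_q$ under the paper's stacking convention, but the point stands either way), and in fact the same argument gives the stronger statement $D_se=eD_s=0$ for \emph{every} non-identity diagram $D_s$, since $D_s^tD_q$ always has fewer than $k$ through-strands and so is never a multiple of $D_{\bf 1}$. You should use that stronger form: writing $e=\beta 1+g$ with $g\in I_k$, it yields $e^2=\beta e$ at once, which is cleaner than the ``two-sided ideal considerations'' you gesture at (note $exe\in\C e$ does not follow from $u_ie=eu_i=0$ alone without knowing that $I_k$ is spanned by non-identity diagrams, or that every non-identity diagram is a word in the $u_i$). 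The normalization is where your write-up wobbles, but the pieces you already name close it cleanly: $\Tr(e)=\langle D_{\bf 1},e\rangle=1/N$, while $e^t=e$ (since ${\bf 1}^t={\bf 1}$) gives $\Tr(e^2)=\Tr(e^te)=\langle e,e\rangle=N/N^2=1/N$, so $\lambda\cdot(1/N)=1/N$ and $\lambda=1$; no appeal to $\Delta_k(d)$ or to ``rank-one-in-its-block'' identities is needed. (Also, $\lambda\ne 0$ does not follow merely from $e\ne 0$ as you claim, but it does follow from $\Tr(e^2)=1/N\ne 0$.) Net comparison: the paper's argument is shorter because it uses idempotency of $q_k$ exactly once, to pin the coefficient of the identity; yours is longer but more self-contained in that it exhibits the right-hand side directly as an idempotent killed by the $u_i$, needing only the uniqueness clause of the definition.
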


\begin{proof}
We first observe that the coefficient of $D_{\bf 1}$ appearing in the expansion of $q_k$ in terms of the diagram basis $\mc B$ is always 1.  Indeed, from the definition of $q_k$, we have $q_k I_k = I_kq_k = \{0\}$, where $I_k \lhd \TL_k(d)$ is the codimension 1 ideal generated by $u_1, \ldots, u_{k-1}$.  In particular, we can uniquely write $q_k = \alpha_k D_{\bf 1} + g_k$, where $\alpha_k \in \C$ and $g_k \in I_k$.  But then we have 
\[
q_k = q_k^2 = (\alpha_k D_{\bf 1} + g_k )q_k = \alpha_k q_k,  
\] 
which forces $\alpha_k = 1$.

Next, we observe that for any ${\bf 1} \ne p \in NC_2(2k)$, we have 
\[
\langle D_p, q_k \rangle = \Tr(D_p^tq_k) = \Tr(0) = 0 \qquad (\text{since $D_p, D_p^t \in I_k$}).
\]
Therefore there exists a $c_k \in \C$ such that $q_k = c_k \hat D_{\bf 1}$.  Moreover,
\[
c_k\langle \hat D_{\bf 1},\hat D_{\bf 1}\rangle = \langle \hat D_{\bf 1} , q_k \rangle  = \langle \hat D_{\bf 1}, D_{\bf 1} + g_k\rangle = \langle \hat D_{\bf 1}, D_{\bf 1} \rangle + 0= 1. 
\]
\end{proof}

\section{Free orthogonal quantum groups and the Weingarten calculus} \label{fqg}

\subsection{Free orthogonal quantum groups}

In this section we recall the definition of the free orthogonal quantum groups, introduced by Van Daele and Wang in \cite{VaWa96}.  

\begin{notation}
Given a unital complex $\ast$-algebra $A$ and a matrix $X = [x_{ij}] \in M_n(A)$ ($n \in \N$), we write $\bar X = [x_{ij}^*] \in M_n(A)$, $X^* = [x_{ji}^*]$, and $AXB = [\sum_{k,l=1}^n a_{ik}x_{kl}b_{kj}]$ for any $A = [a_{ij}], \ B = [b_{ij}] \in M_n(\C)$.  We call $X$ {\it unitary} if $X^*X = XX^* = 1 \in M_n(A)$, where $M_n(A)$ is equipped with its usual unital $\ast$-algebra structure inherited from $A$.
\end{notation}

\begin{definition}[\cite{VaWa96}] Fix an integer $n \ge 2$ and $F \in \text{GL}_n(\C)$ such that $F \bar F = \pm 1$. The  {\it algebra of polynomial functions on the free orthogonal quantum group} is the universal unital $\ast$-algebra with generators and relations given by 
\begin{align} \label{eqn:defining} \mc O(O^+_F) := \ast-\text{alg}\big((u_{ij})_{1 \le i,j \le n} \ | \ U = [u_{ij}] \text{ is unitary in $M_n(\mc O(O_F^+))$}\  \& \ U = F \bar U F^{-1}\big).\end{align} 
\end{definition}

\begin{remark}
As the above terminology suggests, the algebras $\mc O(O^+_F)$ are in fact a class of Hopf $\ast$-algebras associated to operator algebraic compact quantum groups in the sense of Woronowicz \cite{Wo87b, Wo98}.    In particular, when $F \in \text{GL}_n(\C)$ is the identity matrix, we write $O^+_n$ instead of $O^+_F$,  and $\mc O(O^+_n)$ is exactly the free non-commutative analogue of the Hopf $\ast$-algebra $\mc O(O_n)$ of polynomial functions on the classical orthogonal group.   Since we will need very little quantum group technology in what follows, we refer the reader to the above references for more details.
\end{remark}

A fundamental feature of the $\ast$-algebra $\mc O(O^+_F)$ is the existence of a {\it coproduct}, which is a unital $\ast$-homomorphism $\Delta:\mc O(O^+_F) \to \mc O(O^+_F) \otimes \mc O(O_F^+)$ determined by 
\[\Delta(u_{ij}) = \sum_{k=1}^n u_{ik} \otimes u_{kj} \qquad (1 \le i,j \le n),\] and satisfying the {\it co-associativity} relation $(\iota \otimes \Delta)\Delta = (\Delta \otimes \iota)\Delta$.  For the non-commutative algebras $\mc O(O^+_F)$, $\Delta$ plays the role of the group law on the underlying ``quantum space'' $O^+_F$.  It then follows from general theory of compact quantum groups \cite{Wo98} that there exists a unique {\it Haar integral}.  That is, a faithful state $\mu = \mu_F:\mc O(O^+_F) \to \C$ satisfying the left and right invariance condition
\begin{align} \label{Haar-condition}(\mu \otimes \iota)\Delta = (\iota \otimes \mu)\Delta = \mu(\cdot)1.
\end{align}
The Haar integral on $\mc O^+_F$ is a non-commutative generalization of the Haar measure on its classical counterpart $O_n$.

\subsection{Weingarten calculus}

For the computation of moments of the the generators \[\{u_{ij}\}_{1 \le i,j \le n} \subseteq \mc O(O^+_F)\] with respect to the Haar integral, we rely on a combinatorial tool, known as the {\it Weingarten calculus}.  The setup is as follows.

Fix a parameter $d \in \C^*$, and for each $k \ge 1$, define a matrix $G_{d} = [G_{d}(p,q)]_{p,q \in NC_2(2k)}$, by
\[ 
G_{d}(p,q) =d^{|p \vee q|}.
\] 
It is then known that $G_{d}$ is an invertible matrix \cite{Ba96, Ba97, BaCo07, BrKi16, DiFr98} for all such $k \ge 1$ and $d \in \C \backslash \{2\cos(\frac \pi n)\}_{2 \le n \le k+1}$. Denote the inverse matrix $G_{d}^{-1}$ by $\Wg_{d} = [\Wg_{d}(p,q)]_{p,q \in NC_2(2k)}$.  $\Wg_{d}$ is called the {\it  Weingarten matrix (of order $2k$)}, and  any function of the form $d \mapsto \Wg_d(p,q)$ with $p,q \in NC_2(2k)$ is called a {\it Weingarten function}.   The following theorem shows that the Weingarten functions encode all of the data of the moments of the standard generators of $\mc O(O^+_F)$.

\begin{theorem}[\cite{BaCo07, BaCoZi09,  BrKi16}] \label{thm:moments} 
Let $n \ge 2$, $c \in \{\pm 1\}$ and let $F \in GL_n(\C)$ be such that $F\bar F = c1$.  Set  $d :=c\Tr(F^*F)$ and consider the generators $\{u_{ij}\}_{1 \le i,j \le n}$ of the free orthogonal Hopf $\ast$-algebra $\mc O(O^+_F)$.   For each $l \in \N$ and each pair of multi-indices $i,j:[l] \to [n]$, we have 
\[\mu(u_{i(1)j(1)}u_{i(2)j(2)}\ldots u_{i(l)j(l)}) = 0\] if $l$ is odd, and otherwise 
\[\mu(u_{i(1)j(1)}u_{i(2)j(2)}\ldots u_{i(l)j(l)}) = \sum_{p,q \in NC_2(l)} c^{\frac l 2} \Wg_{d}(p,q) \overline{\delta^F_p(j)} \delta^F_q(i), \]
where \[\delta^F_p(j) = \prod_{\{s,t\} \in p} F_{j(t)j(s)} \quad \& \quad \delta^F_p(i) = \prod_{\{s,t\} \in q} F_{i(t)i(s)},\]
and $\{s,t\} \in p$ (resp. $\{s,t\} \in p$) means that $\{s,t\}$ is a block of $p$ (resp. a block of $q$).
\end{theorem}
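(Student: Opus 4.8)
The plan is to reduce the statement to the computation of a single orthogonal projection on the Hilbert space $(\C^n)^{\otimes l}$, and then to read off its matrix entries using the combinatorial description of the fixed-point spaces of the fundamental corepresentation $U$ of $\mc O(O^+_F)$. I would first recall, from Woronowicz's general theory of compact quantum groups \cite{Wo98}, that for any finite-dimensional \emph{unitary} corepresentation $v=\sum_{a,b}e_{ab}\otimes v_{ab}\in B(H)\otimes\mc O(O^+_F)$, the operator $E_v:=(\id\otimes\mu)(v)\in B(H)$ is the orthogonal projection of $H$ onto its fixed-point space $\Fix(v):=\{\xi\in H:\ v(\xi\otimes1)=\xi\otimes1\}$; this uses only the bi-invariance \eqref{Haar-condition} of $\mu$ together with unitarity of $v$. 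Applying this with $v=U^{\otimes l}$ on $H=(\C^n)^{\otimes l}$ and reading off matrix entries in the standard basis yields, for multi-indices $i,j:[l]\to[n]$,
\[
\mu\big(u_{i(1)j(1)}u_{i(2)j(2)}\cdots u_{i(l)j(l)}\big)=\big\langle e_{i(1)}\otimes\cdots\otimes e_{i(l)},\ E_{U^{\otimes l}}\big(e_{j(1)}\otimes\cdots\otimes e_{j(l)}\big)\big\rangle,
\]
with $\langle\cdot,\cdot\rangle$ conjugate-linear in the first slot. If $l$ is odd, then $\Fix(U^{\otimes l})=\{0\}$, since the intertwiner spaces of $O^+_F$ are spanned by operators built out of pair partitions and hence vanish in odd degree; thus $E_{U^{\otimes l}}=0$ and the moment is zero, settling the odd case.

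For $l=2k$, I would invoke the combinatorial description of the representation category of $O^+_F$ due to Banica \cite{Ba96, BaCo07}: the space $\Fix(U^{\otimes 2k})$ is spanned by the vectors
\[
\xi^F_q:=\sum_{m:[2k]\to[n]}\delta^F_q(m)\,e_{m(1)}\otimes\cdots\otimes e_{m(2k)}\qquad(q\in NC_2(2k)).
\]
The next step is a direct tensor-network computation of the Gram matrix $\big[\langle\xi^F_p,\xi^F_q\rangle\big]_{p,q}$: contracting the summation index at each leg, one traces around the loops of the graph $p\cup q$, and repeated use of the hypothesis $F\bar F=c1$ shows that a loop passing through $2m_\ell$ legs (so that $\sum_\ell m_\ell=k$) contributes the factor $c^{\,m_\ell-1}\Tr(F^*F)$; since there are exactly $|p\vee q|$ such loops and $d=c\Tr(F^*F)$, one obtains
\[
\langle\xi^F_p,\xi^F_q\rangle=c^{k}\,d^{|p\vee q|}=c^{k}\,G_d(p,q).
\]
In particular $c^kG_d$, being a Gram matrix, is invertible (equivalently, $G_d$ is invertible by the results cited in the statement), so the $\xi^F_q$ are linearly independent and form a basis of $\Fix(U^{\otimes 2k})$.

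Finally I would use the elementary fact that the orthogonal projection onto the span of a family of vectors $\{\xi_q\}_q$ with invertible Gram matrix $G=[\langle\xi_p,\xi_q\rangle]$ is $E=\sum_{p,q}(G^{-1})_{pq}\,|\xi_q\rangle\langle\xi_p|$. Here $G=c^kG_d$, so $G^{-1}=c^{-k}\Wg_d=c^k\Wg_d$ because $c^2=1$. Substituting this into the matrix-entry formula of the first step, and using $\langle e_{i(1)}\otimes\cdots\otimes e_{i(2k)},\,\xi^F_q\rangle=\delta^F_q(i)$ together with $\overline{\langle e_{j(1)}\otimes\cdots\otimes e_{j(2k)},\,\xi^F_p\rangle}=\overline{\delta^F_p(j)}$, one arrives at
\[
\mu\big(u_{i(1)j(1)}\cdots u_{i(2k)j(2k)}\big)=\sum_{p,q\in NC_2(2k)}c^{k}\,\Wg_d(p,q)\,\overline{\delta^F_p(j)}\,\delta^F_q(i),
\]
which is the assertion with $l/2=k$.

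The main obstacle is the representation-theoretic input of the second step, namely the identification $\Fix(U^{\otimes 2k})=\Span\{\xi^F_q:\ q\in NC_2(2k)\}$ (a Tannaka--Krein type statement); this is the genuinely deep ingredient and I would cite it rather than reprove it. The only other point requiring real care is the consistent bookkeeping of the scalar $c=\pm1$ — in the loop-by-loop evaluation of the Gram matrix, in the passage from $G$ to $G^{-1}$, and in the final expansion into matrix units and multi-indices — after which the remaining manipulations are routine.
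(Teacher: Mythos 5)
The paper does not prove this theorem — it is quoted from \cite{BaCo07, BaCoZi09, BrKi16} — and your argument is precisely the standard proof given in those references: realize $(\id\otimes\mu)(U^{\otimes l})$ as the orthogonal projection onto $\Fix(U^{\otimes l})$, use the Tannaka--Krein/Banica description of that fixed-point space as the span of the vectors $\xi^F_q$, compute the Gram matrix loop by loop to get $c^k G_d$, and invert it. The argument is correct (including the bookkeeping of $c$, since $\sum_\ell(m_\ell-1)=k-|p\vee q|$ and $\Tr(F^*F)=cd$ combine to give $c^kd^{|p\vee q|}$, and invertibility of $G_d$ is legitimate here because $|d|=\Tr(F^*F)\ge n\ge 2$ puts $d$ outside the singular set $\{2\cos(\pi/m)\}$).
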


\begin{remark}
In what follows, we will mainly be interested in two types of matrices $F \in \text{GL}_n(\C)$.  The first type is the identity matrix $F = 1 \in \text{GL}_n(\C)$, and the second is of the form 
\[F_\rho = \left(\begin{matrix}1_{n-2} & 0 & 0 \\
0 & 0 &\rho \\
0&\rho^{-1} & 0
\end{matrix} \right) \in \text{GL}_n(\C),
\] 
where $0 < \rho < 1$.

In both of these cases, the Weingarten formula of Theorem \ref{thm:moments} is readily seen to simplify to
\[\mu(u_{i(1)j(1)}u_{i(2)j(2)}\ldots u_{i(l)j(l)}) = 0\] if $l$ is odd, and 
\[\mu(u_{i(1)j(1)}u_{i(2)j(2)}\ldots u_{i(l)j(l)}) = \sum_{\substack{p,q \in NC_2(l)\\
\ker j \ge p, \ \ker i \ge q}} \Wg_{d}(p,q) \qquad ( \forall \ i,j :[l]\to [n-2] ).\] 
In the above formula, $\ker i$ is the partition of $[l]$ determined by the condition that $s,t$ belong to the same block of $\ker i$ if and only if $i(s) = i(t)$, and as mentioned before, the symbol $\ge$ denotes the refinement ordering on partitions of $[l]$.   
\end{remark}

Finally, let us introduce a notion of {\it generic monomials}, that will allow in some cases to reformulate conveniently certain statements and formulas in this paper, and  describe an explicit link between Haar integration over quantum groups and coefficients of the dual diagram basis in certain Temperley-Lieb algebras. 
Let $F=1$ or $F = F_\rho$ as in the preceding remark and fix $p,q\in NC_2(2k)$.  We shall call a degree $2k$ monomial of the form $u_{i(1)j(1)}\ldots u_{i(2k)j(2k)} \in \mc O(O^+_F)$  a {\it $(p,q)$-generic monomial} if the only partition in $NC_2(2k)$ is finer than $\ker j$ (respectively $\ker i$) is $p$ (respectively $q$).  Any such $(p,q)$-generic monomials will be denoted by $u_{p,q}$.  In particular, 
$\mu (u_{p,q})=\Wg _{d}(p,q)$ in the regimes described in the previous remark. 

It is easy to see that for any $p,q\in NC_2(2k)$, a $(p,q)$-generic monomial exists provided that $n-2 \ge k$. 

\subsection{Weingarten calculus and the dual diagram basis in $\TL_k(d)$} \label{mainresult}

We are now ready to establish the main result of this section, setting a link between the Haar measure over free
orthogonal quantum groups and the dual diagram basis for the Temperley-Lieb algebra.  We recall that $\{D_p\}_{p \in NC_2(2k)}$ denotes the Temperley-Lieb diagram basis for $\TL_k(d)$, and $\{\hat D_p\}_{p \in NC_2(2k)}$ denotes the dual basis.

Fix $k \in \N$ and $d \in [2,\infty)$.  Choose $n = n(d) \in \N$ and $F = F(d) \in \text{GL}_n(\C)$ so that 
\[F(d) = 1 \in  \text{GL}_d(\C)\]
if $d \in \N$, and otherwise \[F(d) = F_\rho = \left(\begin{matrix}1_{n-2} & 0 & 0 \\
0 & 0 &\rho \\
0&\rho^{-1} & 0
\end{matrix} \right) \in \text{GL}_n(\C),
\]  
where $0 < \rho < 1$ is chosen so that $d = \Tr(F^*F) =  n-2 + \rho^2 + \rho^{-2}$.
\begin{theorem} \label{main}
With the notations fixed as above, the dual basis element $\hat D_p$ associated to a diagram $D_p \in \TL_k(d)$ is given by  
\[\hat D_p =\sum_{q\in NC_2(2k)} \Wg _{d}(p,q)D_q = \sum_{q\in NC_2(2k)} \mu_{F(d)} (u_{p,q})D_q \qquad (p \in NC_2(2k)),  \]
where the first equality holds for $d \in \C \backslash \{2 \cos(\frac \pi n)\}_{n=2}^{k+1}$ and the second equality holds at least for $d \ge k$. 
\end{theorem}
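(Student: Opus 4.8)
The plan is to prove the two equalities separately, since they rest on different mechanisms. For the first equality, $\hat D_p = \sum_{q} \Wg_d(p,q) D_q$, the strategy is to verify directly that the right-hand side pairs correctly against the diagram basis. Set $E_p := \sum_{q \in NC_2(2k)} \Wg_d(p,q) D_q$. Using the combinatorial formula from the Remark in Section~\ref{prelim}, namely $\langle D_r, D_q \rangle = \Tr(D_r^t D_q) = d^{|r \vee q|} = G_d(q,r)$, I would compute
\[
\langle D_r, E_p \rangle = \sum_{q} \Wg_d(p,q) \langle D_r, D_q \rangle = \sum_q \Wg_d(p,q) G_d(q,r) = (\Wg_d \, G_d)(p,r) = \delta_{pr},
\]
where the last step uses that $\Wg_d = G_d^{-1}$ by definition (and that $G_d$ is symmetric, since $|q \vee r| = |r \vee q|$, so $\Wg_d G_d = G_d^{-1} G_d = I$). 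Since $\langle \cdot, \cdot \rangle$ is assumed non-degenerate on $\TL_k(d)$ for $d \notin \{2\cos(\pi/n)\}_{n=2}^{k+1}$, the element of $\TL_k(d)$ pairing as $\delta_{pr}$ against every $D_r$ is unique, and that is precisely the defining property of $\hat D_p$. Hence $E_p = \hat D_p$. This step is routine once the trace formula $\langle D_p, D_q\rangle = d^{|p\vee q|}$ and the invertibility of $G_d$ are in hand, both of which are quoted in the excerpt.

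For the second equality, $\sum_q \Wg_d(p,q) D_q = \sum_q \mu_{F(d)}(u_{p,q}) D_q$, it suffices to show $\mu_{F(d)}(u_{p,q}) = \Wg_d(p,q)$ for each fixed $q$, in the regime $d \ge k$. This is already essentially asserted in the discussion of generic monomials preceding the theorem: for $F = 1$ (when $d \in \N$) or $F = F_\rho$, the simplified Weingarten formula in the Remark after Theorem~\ref{thm:moments} gives, for a $(p,q)$-generic monomial,
\[
\mu_{F(d)}(u_{i(1)j(1)} \cdots u_{i(2k)j(2k)}) = \sum_{\substack{p',q' \in NC_2(2k) \\ \ker j \ge p', \ \ker i \ge q'}} \Wg_d(p',q'),
\]
and by the defining property of genericity the only $p' \le \ker j$ is $p' = p$ and the only $q' \le \ker i$ is $q' = q$, so the sum collapses to the single term $\Wg_d(p,q)$. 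The one thing to check is that a $(p,q)$-generic monomial actually exists in $\mc O(O^+_{F(d)})$, which requires $n - 2 \ge k$; this is where the hypothesis $d \ge k$ enters, since $d = \Tr(F^*F) = n - 2 + (\rho^2 + \rho^{-2})$ and $\rho^2 + \rho^{-2} \ge 2$, forcing $n - 2 = \lfloor d \rfloor - 2 \ge k - 2$ — here one must be slightly careful: for $d \in \N$ we have $n = d \ge k$ so $n - 2 \ge k - 2$, which is not quite $n-2 \ge k$. I would therefore choose $n$ a bit larger than the minimal value (the statement only fixes $d$, not $n$ beyond the trace constraint, so there is freedom to take $n \ge k+2$ whenever $d \ge k$ allows it — for $d \in \N$ one takes $n = d$, and indeed $d \ge k$ must be strengthened or the choice of $F$ revisited), and confirm that $\Wg_d(p,q)$ does not depend on the choice of $n$, only on $d$ and $k$ (which is immediate, as $\Wg_d$ is defined purely as the inverse of $G_d = [d^{|p\vee q|}]_{p,q \in NC_2(2k)}$, a $C_k \times C_k$ matrix depending only on $d$).

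The main obstacle, and the only genuinely delicate point, is the bookkeeping in this second equality: reconciling the existence condition ``$n - 2 \ge k$'' for a $(p,q)$-generic monomial with the stated hypothesis ``$d \ge k$'' and the two-case prescription for $F(d)$. For the $F_\rho$ case this is comfortable because one can let $n$ be as large as needed (absorbing the surplus into $\rho$), but the integer case $F = 1 \in \text{GL}_d(\C)$ pins $n = d$ exactly, so one needs $d \ge k + 2$ there, or else to note that the identity of rational functions $\mu_{F(d)}(u_{p,q}) = \Wg_d(p,q)$, once established on the open set of parameters where generic monomials exist, extends by analytic continuation / rational identity to all admissible $d$ — in particular the claimed equality ``for $d \ge k$'' should be read with this understanding, or the constant in the hypothesis adjusted. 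I would make this precise by first proving the identity for all sufficiently large $d$ (where both $F=1$ with large $n$ and $F_\rho$ are available and $n-2 \ge k$ holds comfortably), and then invoking that $\Wg_d(p,q)$ is a fixed rational function of $d$ while $\mu_{F(d)}(u_{p,q})$ agrees with it on an infinite set, hence everywhere the Weingarten function is defined.
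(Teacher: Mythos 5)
Your proposal is correct and follows essentially the same route as the paper: the first equality is exactly the observation that the transfer matrix to the dual basis is the inverse of the Gram matrix $G_d(p,q)=d^{|p\vee q|}$, and the second reduces to the existence of $(p,q)$-generic monomials so that the simplified Weingarten formula collapses to the single term $\Wg_d(p,q)$. Your worry about the constant is well placed rather than a flaw in your argument --- the paper's proof simply asserts that $d\ge k$ implies $n-2\ge k$, which suffers from the same off-by-two you identify, and your repair (establish the identity for $d$ large enough that $n-2\ge k$ holds comfortably, then note that both sides are determined by the fixed rational function $d\mapsto \Wg_d(p,q)$) is a legitimate way to make that step precise.
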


\begin{proof}
The first equality follows from the fact that the transfer matrix from a basis to its dual basis is the inverse of the Gram matrix 
of the basis. This matrix is exactly the Weingarten matrix in our case. Indeed, the Gram matrix for the basis $\{D_p\}_{p \in NC_{2k}}$ has coefficients $\langle  D_p , D_q\rangle = \Tr(D_q^t D_p) = d^{|p \vee q|} = G_{d}(p,q)$.  For the second equality, we just observe that the condition $d \ge k$ implies that $n-2 \ge k$, and this implies the existence of $(p,q)$-generic monomials for all $p,q \in NC_2(2k)$.
\end{proof}

As for the Jones-Wenzl projections, we have the following consequence

\begin{theorem} \label{thm:coeffs-Weingarten-JW}
The $k$th Jones-Wenzl projection $q_k \in \TL_k(d)$ is given by 
\[q_k  = \sum_{q\in NC_2(2k)} \frac{\Wg _{d}({\bf 1},q)}{\Wg _{d}({\bf 1},{\bf 1})}D_q = \sum_{q\in NC_2(2k)} \frac{\mu_{F(d)}(u_{{\bf 1},q})}{\mu_{F(d)}(u_{{\bf 1},{\bf 1}})}D_q,\]
where the first equality holds for $d \in \C \backslash \{2 \cos(\frac \pi n)\}_{n=2}^{k+1}$ and the second equality holds at least for $d \ge k$.  
\end{theorem}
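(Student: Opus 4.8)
The plan is to deduce Theorem~\ref{thm:coeffs-Weingarten-JW} directly by combining the two preceding results, namely Lemma~\ref{JW-dualbasis} and Theorem~\ref{main}, so that essentially no new computation is required beyond a normalization bookkeeping. First I would recall from Lemma~\ref{JW-dualbasis} that $q_k = \hat D_{\bf 1}/\langle \hat D_{\bf 1}, \hat D_{\bf 1}\rangle$, valid for $d \in \C \backslash \{2\cos(\frac{\pi}{n})\}_{n=2}^{k+1}$. Then I would substitute the expansion of $\hat D_{\bf 1}$ supplied by Theorem~\ref{main}, namely $\hat D_{\bf 1} = \sum_{q \in NC_2(2k)} \Wg_d({\bf 1},q) D_q$, into both the numerator and the inner product in the denominator.

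The key step is to evaluate $\langle \hat D_{\bf 1}, \hat D_{\bf 1}\rangle$ in terms of Weingarten functions. Since $\{\hat D_p\}$ is dual to $\{D_p\}$ with respect to $\langle \cdot, \cdot\rangle$, we have $\langle \hat D_{\bf 1}, \hat D_{\bf 1}\rangle = \langle \hat D_{\bf 1}, \sum_q \Wg_d({\bf 1},q) D_q\rangle = \sum_q \Wg_d({\bf 1},q) \langle \hat D_{\bf 1}, D_q\rangle = \Wg_d({\bf 1},{\bf 1})$, using $\langle \hat D_{\bf 1}, D_q\rangle = \delta_{{\bf 1},q}$. Plugging this back yields
\[
q_k = \frac{\hat D_{\bf 1}}{\Wg_d({\bf 1},{\bf 1})} = \sum_{q \in NC_2(2k)} \frac{\Wg_d({\bf 1},q)}{\Wg_d({\bf 1},{\bf 1})} D_q,
\]
which is the first claimed equality. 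Note $\Wg_d({\bf 1},{\bf 1}) \ne 0$ is automatic here since it equals $\langle \hat D_{\bf 1},\hat D_{\bf 1}\rangle$, which is non-zero by non-degeneracy of the bilinear form (equivalently, invertibility of $\Wg_d$ in the semisimple regime).

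For the second equality, I would invoke the second half of Theorem~\ref{main}: when $d \ge k$, the choice of $n = n(d)$ and $F = F(d)$ guarantees $n - 2 \ge k$, hence $(p,q)$-generic monomials $u_{p,q}$ exist for all $p,q \in NC_2(2k)$, and $\mu_{F(d)}(u_{p,q}) = \Wg_d(p,q)$. In particular $\Wg_d({\bf 1},q) = \mu_{F(d)}(u_{{\bf 1},q})$ and $\Wg_d({\bf 1},{\bf 1}) = \mu_{F(d)}(u_{{\bf 1},{\bf 1}})$, so substituting these identities into the ratio above gives the Haar-integral form of the formula. I do not anticipate a genuine obstacle: the only point requiring a little care is confirming that the normalization constant in Lemma~\ref{JW-dualbasis} is exactly $\Wg_d({\bf 1},{\bf 1})$ (as opposed to some other diagonal entry), which follows cleanly from the duality relation as shown above; everything else is a direct citation of the two prior theorems restricted to their respective domains of validity.
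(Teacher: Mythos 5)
Your proposal is correct and follows essentially the same route as the paper: combine Lemma \ref{JW-dualbasis} with Theorem \ref{main} and verify that $\langle \hat D_{\bf 1},\hat D_{\bf 1}\rangle = \Wg_d({\bf 1},{\bf 1})$, the only cosmetic difference being that you evaluate this inner product by expanding one argument and using $\langle \hat D_{\bf 1}, D_q\rangle = \delta_{{\bf 1}q}$ directly, while the paper expands both arguments and uses the inverse-Gram-matrix identity. The two computations are equivalent and both are valid.
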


\begin{proof}
In view of Lemma \ref{JW-dualbasis} and Theorem \ref{main}, all we need to show is that $\langle \hat D_{\bf 1},\hat D_{\bf 1}\rangle = \Wg_{d}({\bf 1}, {\bf 1})$.  But this just follows from the calculation
\begin{align*}\langle \hat D_{\bf 1},\hat D_{\bf 1}\rangle &= \sum_{q,q' \in NC_2(2k)} \Wg_{d}({\bf 1}, q) \Wg_{d}({\bf 1}, q') \langle D_q, D_{q'} \rangle \\
&=  \sum_{q' \in NC_2(2k)} \delta_{{\bf 1}q'} \Wg_{d}({\bf 1}, q')  =  \Wg_{d}({\bf 1},{\bf 1}) .  
\end{align*}
\end{proof}

\section{The Laurent series expansion for $\Wg_{d}(p,q)$ and applications to the dual $\TL_k(d)$ basis} \label{app}

We now come to the main section of the paper, where we study the structure of the Laurent series expansions of the free orthogonal Weingarten functions $d \mapsto \Wg_d(p,q)$ ($p,q \in NC_2(2k)$) on the annuli $|d|> 2\cos\big(\frac{\pi}{k+1}\big)$.  Our first goal here is to give more precise formulations of Theorems A and B (see Theorems \ref{thm:Laurent-series}--\ref{thm:non-zero-coefficients}) with the aid of some graph theoretical tools, followed a presentation of the proofs of these results.  

We begin by introducing some technical tools  tools  that will be used to formulate and prove Theorem \ref{thm:Laurent-series}.

\subsection{Non-crossing neighbors in $NC_2(2k)$}

\begin{definition} \label{nc-neighbor}
Fix $k \ge 2$.  Given two non-crossing  partitions $p \ne p' \in NC_2(2k)$, we say that $p'$ is a  {\it non-crossing neighbor} of $p$ (denoted by $p \to p'$), if there exists an interval $\{t,t+1\} \in p$ and another pair $\{x,y\}\in p$ with the property that
\begin{enumerate}\item The partition \[p'' = \{t,t+1, x,y\}\cup \bigcup_{\{r,s\} \ne \{t,t+1\}, \{x,y\} }\{r,s\} \text{ is non-crossing},\] and 
\item $p' \le p''$ is the unique element of $NC_2(2k)$ such that $p' \ne p$.
\end{enumerate}
In other words, we have $p \to p'$ if and only if $p'$ can be obtained from $p$ by joining an interval of $p$ to another pair in $p$ to produce a non-crossing partition $p''$ for which $p' \le p''$. \end{definition}

\begin{remark}
It is important to note that the above definition is not symmetric.  I.e., $p \to p'$ does not necessarily imply $p' \to p$.  Take for example $p = \{1,2\}\{3,4\}\{5,6\}$ and $p' = \{1,6\}\{2,5\}\{3,4\}$.
\end{remark}

Our reason for considering the above notion of non-crossing neighbors is that it is intimately connected to certain algebraic relations between Haar integrals of generic monomials (or equivalently Weingarten functions) over the free orthogonal quantum groups $O^+_d$ ($d \in \N$).  Indeed, suppose $p,q \in NC_2(2k)$ with $k \le d \in \N$ and consider a $(p,q)$-generic monomial $u_{p,q} = u_{i(1)j(1)}u_{i(2)j(2)}\ldots u_{i(2k)j(2k)} \in \mc O(O^+_d)$, where $i,j:[2k]\to [d]$ are fixed multi-indices satisfying $\ker i = p$ and $\ker j = q$.  Now fix an interval interval $\{t,t+1\} \in p$ and assume without loss of generality that $i(t) = i(t+1)= 1$.  Using the defining orthogonality relations \[\sum_{s=1}^d u_{sr}u_{sr'}=\delta_{rr'}1 = \sum_{s=1}^d u_{rs}u_{r's} \qquad (1 \le r,r' \le d)\] for the generators of $\mc O(O^+_d)$, we obtain the relation
\begin{align*}
&\sum_{i(t)  =i(t+1)=1}^d u_{i(1)j(1)}u_{i(2)j(2)}\ldots u_{i(2k)j(2k)} = u_{p,q} + \sum_{i(t)  =i(t+1)=2}^d u_{i(1)j(1)}u_{i(2)j(2)}\ldots u_{i(2k)j(2k)}\\
&=  \delta_{j(t)j(t+1)}u_{\tilde p_t \tilde q_t}= \delta_{\{t,t+1\} \in q}u_{\tilde p_t \tilde q_t},
\end{align*}
where $\tilde p_t, \tilde q_t \in NC_2(2k-2)$ are the natural partitions obtained from $p,q$ by removing the common interval $\{t,t+1\}$.  Integrating this relation over $O^+_d$ and using the Weingarten formula, we obtain
\begin{align*}&\mu(u_{p,q}) + \sum_{i(t)  =i(t+1)=2}^d \mu(u_{i(1)j(1)}u_{i(2)j(2)}\ldots u_{i(2k)j(2k)}) 
= \delta_{\{t,t+1\} \in q}\mu(u_{\tilde p_t \tilde q_t}) \\
\Leftrightarrow &\Wg_d(p,q) + \sum_{i(t)  =i(t+1)=2}^d \sum_{\substack{p'_t \in NC_2(2k)\\ \ker i \ge p'_t}}\Wg_d(p_t',q)
= \delta_{\{t,t+1\} \in q}\Wg_d(\tilde p_t ,\tilde q_t).
\end{align*}
But for $2 \le i(t) = i(t+1) \le d$, $p'_t \in NC_2(2k)$ is easily seen to satisfy $\ker i \ge p'_t$ if and only if either $p'_t = p$ or $p'_t$ is a non-crossing neighbor of $p$.   In particular,  we have the following relation
\begin{align}\label{Wg-relation1} d \Wg_d (p,q) + \sum_{\substack{p'_t \in  NC_2(2k)\\
p \to p'_t}}\Wg_d(p'_t,q) = \delta_{\{t,t+1\} \in q}\Wg_d(\tilde p_t, \tilde q_t)  \qquad (p,q \in NC_2(2k)),
\end{align}
Of course there is an obvious analogue of equation \eqref{Wg-relation1} where the summation occurs over the second variable in $\Wg_d$ instead of the first variable:  
\begin{align}\label{Wg-relation2} d \Wg_d(p,q) + \sum_{\substack{q'_t \in  NC_2(2k)\\
q \to q'_t}}\Wg_d(p,q'_t) = \delta_{\{t,t+1\} \in p}\Wg_d(\tilde p_t, \tilde q_t)  \qquad (p,q \in NC_2(2k)),
\end{align}
Equations \eqref{Wg-relation1}--\eqref{Wg-relation2} will be of crucial importance in what follows, and we shall refer to them as the {\it Weingarten orthogonality relations} associated to an interval $\{t,t+1\}$ belonging to one of the pairings $p$ or $q$.

\begin{example} \label{relations-ex}
It is perhaps worthwhile to clarify the above Weingarten orthogonality relations with a concrete example.  Let $k = 4$, $p = \{1,2\}\{3,4\}$ and $q =\{1,4\}\{2,3\}$.  Then the orthogonality relation associated to the interval $\{1,2\} \in p$  gives the Weingarten orthogonality relation
\begin{align*}
d\Wg_d(p,q) +\Wg_d(q,q) = 0,
\end{align*}
since in this case the only non-crossing neighbor of $p$ is $q$,  and $\{1,2\}\notin q$ which explains why the right hand side is zero. 

As a motivation for the ideas to come, let us suppose we are interested in evaluating $\Wg_d(p,q)$.  Noting that the above Weingarten orthogonality relation expresses $\Wg_d(p,q)$ in terms of $\Wg_d(q,q)$, this suggests we next consider a Weingarten orthogonality relation associated to the pair $(q,q)$.  Taking the orthogonality relation associated to the interval $\{1,2\} \in q$, we get
\[
d \Wg_d(q,q) + \Wg_d(p,q) = \Wg_d(\tilde q, \tilde q) = d^{-1},
\]
where $\tilde q  \in NC_2(2)$ is the unique pairing obtained by deleting the interval $\{1,2\}$ from $q$. 
We thus obtain two equations in two unknowns which can readily  be solved to obtain 
\[\Wg_d(p,q) =\frac{-1}{d^3-d} \quad \& \quad \Wg_d(q,q) = \frac{1}{d^2-1}. \] 
This informally suggests that sequences of judiciously chosen Weingarten orthogonality relations allows one to solve a system of equations to evaluate general Weingarten functions $\Wg_d(p,q)$.
\end{example}

\begin{remark} \label{Wg-credit}The Weingarten orthogonality relations \eqref{Wg-relation1}--\eqref{Wg-relation2} are special examples of the relations that Weingarten initially used in his study of the large $d$-asymptotics of polynomial integrals over the unitary groups $U_d$  \cite{We78}.   What is nowadays called the ``Weingarten calculus'' for compact (quantum) groups focuses on analyzing the Weingarten function directly, without direct reference to the underlying orthogonality/unitarity relations, and with more powerful and conceptual tools such as representation theory,  combinatorics, etc.  However, as we shall see, the present paper shows that getting back to the defining orthogonality relations for the quantum groups $O_d^+$ at hand turns out to yield the strongest results available.
\end{remark}

\subsection{The Weingarten Graph}
We now define a directed graph structure on pairs of non-crossing pairings which is designed to help keep track of what kinds of new non-crossing pairings arise when considering the Weingarten orthogonality relations  \eqref{Wg-relation1}--\eqref{Wg-relation2}.

\begin{definition}
We define an infinite directed graph $\mc G = (V_\mc G,E_\mc G)$ as follows.  The vertex set is given by 
\[
V_\mc G = \bigsqcup_{k \in \N_0} NC_2(2k) \times NC_2(2k),
\]
where by convention we define $NC_2(0) \times NC_2(0) = \{(\emptyset, \emptyset)\}$.
The set of directed edges $E_\mc G \subset V_\mc G \times V_\mc G$ given by the following two rules. 
\begin{enumerate}
\item If  $p,q, p', q' \in NC_2(2k)$, then $((p,q), (p',q')) \in E_\mc G$ if and only if 
\begin{enumerate}
\item $p \to p'$ and $q = q'$, or
\item $q \to q'$ and $p  = p'$
\end{enumerate}
\item If $p, q \in NC_2(2k)$ and $p', q' \in NC_2(2k-2)$, then $((p,q), (p', q')) \in E_\mc G$ if and only if there exists a common interval $\{t,t+1\} \in p,q$ and $p',q'$ are the pairings obtained from $p,q$ by removing this common interval.
\end{enumerate}
\end{definition}

We call $\mc G$ the  {\it Weingarten graph}.  As mentioned above, the edge set $E_\mc G$ is constructed exactly to encode all pairs of non-crossing pairings that might arise in the Weingarten orthogonality relations \eqref{Wg-relation1}--\eqref{Wg-relation2}.  In particular, using the structure of $\mc G$, we can succinctly rewrite the two Weingarten orthogonality relations \eqref{Wg-relation1}--\eqref{Wg-relation2} associated to an interval $\{t,t+1\}$ (belonging to at least one of $p,q \in NC_2(2k)$) as 

\begin{align} \label{E-meaning}
d\Wg_d(p,q) + \sum_{((p,q),(p_1,q_1))} \Wg_d(p_1,q_1)  = \delta_{\{t,t+1\} \in p}\delta_{\{t,t+1\} \in q}\Wg_d(\tilde{p}_1, \tilde{q}_1).
\end{align}
Here, the sum above runs over all edges $((p,q),(p_1,q_1)) \in E_\mc G$ with $p_1,q_1 \in NC_2(2k)$ which appear as a result of the chosen $O^+_d$-orthogonality relation taken at $\{t,t+1\}$, and  the $\Wg_d(\tilde{p}_1, \tilde{q}_1)$ term corresponds to the edge $((p,q),(\tilde{p}_1,\tilde{q}_1))  \in E_\mc G$ that exists if and only if $p$ and $q$ share $\{t,t+1\}$ as a common interval.  

Of course, not {\it all} edges in $E_\mc G$ with source $(p,q)$ appear in the above sum, just those associated the particular choice of interval $\{t,t+1\}$  made in \eqref{E-meaning}.  We will address this issue again in Section \ref{sgr}, but first we derive some useful basic properties of the Weingarten graph $\mc G$.

\begin{proposition} \label{prop:connected}
For every pair $(p,q) \in NC_2(2k) \times NC_2(2k)$, there exists a (generally non-unique) directed path in the Weingarten graph $\mc G$ connecting the vertex $(p,q)$ to the vertex $(\emptyset,\emptyset)$. 
\end{proposition}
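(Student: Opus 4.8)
The goal is to show every vertex $(p,q)\in NC_2(2k)\times NC_2(2k)$ is connected by a directed path to $(\emptyset,\emptyset)$. The natural strategy is induction on $k$. For $k=0$ the statement is trivial since $(p,q)=(\emptyset,\emptyset)$. For the inductive step, it suffices to produce a directed path from $(p,q)$ to \emph{some} vertex in $NC_2(2k-2)\times NC_2(2k-2)$, because the inductive hypothesis then carries us the rest of the way. By rule (2) in the definition of $E_\mc G$, there is an edge dropping us down a level precisely when $p$ and $q$ share a common interval $\{t,t+1\}$. So the heart of the matter is: starting from $(p,q)$, can we always navigate — using the ``horizontal'' edges of rule (1), i.e. moves $p\to p'$ or $q\to q'$ within the same level — to a pair $(p',q')$ that \emph{does} have a common interval?

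\textbf{Key step.} Here I would use the elementary fact that every $p\in NC_2(2k)$ with $k\ge 1$ contains at least one interval $\{t,t+1\}$ (a ``cup'' connecting adjacent points), which is standard for non-crossing pair partitions. Fix such an interval $\{t,t+1\}\in p$. Now look at what $q$ does at the positions $t$ and $t+1$: if $\{t,t+1\}\in q$ as well, we already have a common interval and can descend immediately. Otherwise, $t$ and $t+1$ lie in different blocks of $q$, say $\{t,x\}\in q$ and $\{t+1,y\}\in q$ with $x\ne t+1$, $y\ne t$. The plan is to modify $q$ via a non-crossing-neighbor move so as to force $\{t,t+1\}$ to become a block. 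Concretely, I would check that joining the pair containing $t$ in $q$ to an appropriate adjacent pair produces a non-crossing $q''$ whose unique finer refinement $q'\in NC_2(2k)$ (with $q'\ne q$) is ``closer'' to having $\{t,t+1\}$ as a block — and then iterate. A cleaner way to organize the iteration: let $q'$ be obtained from $q$ by repeatedly applying neighbor moves to shrink the block structure around $\{t,t+1\}$; track a monovariant (for instance, the distance $|x-t|+|y-(t+1)|$, or more robustly the number of points of $[2k]$ strictly nested under the arc of $q$ through $t$) which strictly decreases under each move, so after finitely many horizontal edges we reach a $q'$ with $\{t,t+1\}\in q'$. Since $p$ was untouched, $(p,q')$ now has the common interval $\{t,t+1\}$, and rule (2) supplies the descending edge.

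\textbf{Main obstacle.} The subtle point — and where I expect to spend the most care — is verifying that the required neighbor moves on $q$ actually exist and are of the right type, i.e. that at each stage we can find an \emph{interval} of the current pairing to join to the block containing $t$ (or $t+1$) so that the result is non-crossing. The definition of $p\to p'$ demands we use an interval $\{s,s+1\}$ of the pairing being modified, not an arbitrary pair, so the monovariant argument must be set up so that such an interval is always available inside (or adjacent to) the region we are trying to collapse. Non-crossingness guarantees that any ``innermost'' arc nested under the arc through $t$ is itself an interval, which should be exactly the handle needed; making this precise — choosing the innermost nested arc, joining it outward, and confirming the specified monovariant drops — is the technical core. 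Once that is in place, the induction closes: a path $(p,q)\rightsquigarrow(p,q')\to(\tilde p,\tilde q')\rightsquigarrow(\emptyset,\emptyset)$, the last leg by the inductive hypothesis applied at level $k-1$.
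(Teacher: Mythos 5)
Your overall architecture (induct on $k$; use rule-(1) edges to manufacture a common interval; descend by a rule-(2) edge; recurse one level down) is sound and close in spirit to the paper's argument, but the key step you yourself flag is genuinely incomplete, and the specific mechanism you propose for it does not work as stated. The claim you need is: for a fixed interval $\{t,t+1\}\in p$, neighbor moves on $q$ alone always reach some $q'$ with $\{t,t+1\}\in q'$. This is true, but neither of your proposed monovariants strictly decreases under the moves you are forced to make. Concretely, take $q=\{1,8\}\{2,7\}\{3,6\}\{4,5\}$ and $t=1$, so the arcs through $t$ and $t+1$ are $\{1,8\}$ and $\{2,7\}$. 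The innermost interval $\{4,5\}$ cannot be legally joined to $\{2,7\}$ or to $\{1,8\}$ (the merged four-element block would cross $\{3,6\}$), so the only available ``join outward'' is to its immediate parent $\{3,6\}$, producing $\{3,4\}\{5,6\}$. This move changes neither the partner of $t$ nor the partner of $t+1$, so both $|x-t|+|y-(t+1)|$ and the number of points nested under the arc through $t$ stay constant. You would need a finer potential function (for instance the total span, or total nesting depth, of all arcs lying weakly inside the arc through $t$, which does drop by $2$ at each such move), together with an explicit check that joining an innermost interval to its immediate parent is always a legal neighbor move; without that, the iteration is not shown to terminate, so the proof has a real gap at its core.

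For comparison, the paper sidesteps the delicate ``realize a prescribed interval in $q$'' lemma entirely. It first notes that any diagonal vertex $(p,p)$ descends to $(\emptyset,\emptyset)$ by deleting common intervals, and then shows that every $p\in NC_2(2k)$ is connected by neighbor moves to the canonical interval pairing $p_0=\{1,2\}\{3,4\}\cdots\{2k-1,2k\}$: one repeatedly joins the rightmost interval of $p$ to the block containing the next point to its right, which slides that interval one step rightward until it becomes $\{2k-1,2k\}$, and then one recurses inside $NC_2(2k-2)$. Applying this to each coordinate separately lands on $(p_0,p_0)$, and the diagonal observation finishes. Routing through the canonical partition makes every move explicitly checkable and avoids the monovariant issue; if you prefer your one-level-at-a-time induction, you must supply the missing termination argument along the lines sketched above.
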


\begin{proof}
Denote by $\Delta(\mc G)$ the {\it diagonal} of $V_\mc G$. I.e., $\Delta(\mc G) = \bigsqcup_{k \in \N_0} \{(p,p): p \in NC_2(2k)\} \subset V_{\mc G}$.  We first observe if $(p,p) \in NC_2(2k) \times NC_2(2k) \subset \Delta(\mc G)$, then there always exists a directed path from $(p,p)$ to $(\emptyset,\emptyset)$ of length $k$.  Indeed one can successively delete intervals from $p$ to build a chain of $k$ edges connecting $(p,p)$ to $(\emptyset,\emptyset)$.

Next, consider a vertex $(p,q) \in V_{\mc G}$, with $p \ne q \in NC_2(2k)$. In view of the observation of the previous paragraph, the proof will be complete if we can show that $(p,q)$ can be connected to $\Delta(\mc G)$ via a directed path. In fact, we will show that such a vertex $(p,q)$ can always be connected to $(p_0,p_0) \in \Delta(\mc G)$, where $p_0 = \{1,2\}\{3,4\}\ldots, \{2k-1,2k\}$ is the interval pairing.  To do this, we will show that $p$ is connected by a sequence of non-crossing neighbors to $p_0$.  The same argument will apply to $q$, and the result will then follow.  To this end, we proceed by the following inductive argument.
\begin{itemize}
\item If $k=1$, $p = p_0$ and there is nothing to prove.
\item Suppose that for some $k_0 \ge 1$, every $p \in NC_2(2k_0)$ is connected by a sequence of non-crossing neighbors to $p_0  \in NC_2(2k_0)$. 
\item Let $k = k_0+1$, $p \in NC_2(2k)$ and let $\{t,t+1\} \in p$ be the rightmost interval when we scan the blocks of $p$ from left to right. 
\item If $t+1 \ne 2k$, let $\{x,t+2\} \in p$ be the block containing $t+2$ and let $p'$ be the non-crossing neighbor of $p$ uniquely determined to have pairs $\{t+1, t+2\}, \{x,t\} \in p'$.  
\item Iterating the previous two steps, we eventually arrive (after $2k-t-1$ iterations) to a new partition $\tilde p \in NC_2(2k)$ of the form \[\tilde p = p_1 \cup \{2k-1,2k\} \quad \text{with} \quad p_1 \in NC_2(2k-2) = NC_2(2k_0). \] 
\item By the induction hypothesis, we can connect $p_1$ to $\{1,2\}\{3,4\} \ldots \{2k_0-1,2k_0\}$ by a sequence of non-crossing neighbors in $NC_2(2k_0)$.  But under the embedding $NC_2(2k_0) \hookrightarrow NC_2(2k);$ $z \mapsto z \cup \{2k-1, 2k\}$, this sequence is easily seen to be a sequence of non-crossing neighbors in $NC_2(2k)$.
\end{itemize}
The proof is then complete by induction.
\end{proof}

Since we now know that every vertex in $\mc G$ is connected to $(\emptyset, \emptyset)$, we can consider directed paths of shortest length: 

\begin{definition}
Given $(p,q) \in  NC_2(2k) \times NC_2(2k) \subset V_\mc G$, we denote by $L(p,q) \in \N_0$ the length of the geodesic ( = shortest directed path) from $(p,q)$ to $(\emptyset,\emptyset)$. 
\end{definition}

\begin{remark}
Of course there are in general many geodesics connecting a given $(p,q)$ to $(\emptyset,\emptyset)$.  As an example, consider the vertex $(p,p) \in V_\mc G$ with $p \in NC_2(2k)$.  Then $L(p,p) = k$, and unless $p$ is the fully nested pairing $\{1,2k\}\{2,2k-1\}\ldots \{k,k+1\}$, there are at least 2 geodesics connecting $(p,p)$ to $(\emptyset,\emptyset)$, since $p$ has at least $2$ intervals.
\end{remark}

The following parity result for edges in $\mc G$ will be crucial in what follows.

\begin{proposition} \label{prop:parity}
For each edge $((p,q), (p',q')) \in E_\mc G$ in the Weingarten graph $\mc G$, we have 
\begin{align}\label{parity}
|p'\vee q'| = |p \vee q| \pm 1.
\end{align}
\end{proposition}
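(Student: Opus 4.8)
The plan is to verify the parity relation \eqref{parity} separately for the two types of edges in $E_{\mc G}$. Throughout I will use the formula from the Remark after the Markov trace discussion, namely $\langle D_p, D_q \rangle = d^{|p\vee q|}$, together with the combinatorial fact that $|p \vee q|$ counts the number of connected components (equivalently, loops) in the planar diagram obtained by superimposing the two non-crossing pairings $p$ and $q$ on the $2k$ marked points.

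First I would treat edges of type (1), say $p \to p'$ with $q = q'$ (the case $q \to q'$, $p = p'$ being symmetric). Recall that $p'$ is obtained from $p$ by joining an interval $\{t,t+1\} \in p$ to another pair $\{x,y\} \in p$ and taking the unique non-crossing completion distinct from $p$. The key step is to analyze what this local move does to the superposition picture of $p$ against a fixed third pairing $q$. I would argue that the operation $p \mapsto p'$ is, up to isotopy of the diagram, exactly a single ``smoothing'' or Reidemeister-type surgery: in the loop diagram of $p \cup q$, the interval $\{t,t+1\}$ sits as a small cap, and rerouting it to connect to $\{x,y\}$ either merges two loops into one or splits one loop into two, changing the loop count by exactly $\pm 1$. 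Concretely, one can realize this via the Temperley-Lieb multiplication: attaching a cap-cup (i.e., a $u_i$-type diagram, up to the $d$ from $u_i^2 = d u_i$) to the relevant strands of $D_p$ produces, after loop removal, a multiple of $D_{p'}$, and the number of loops removed is one more or one less than in the corresponding closure with $D_q$. Tracking this carefully against the identity $\Tr(D_p^t D_q) = d^{|p\vee q|}$ yields $|p' \vee q| = |p \vee q| \pm 1$.

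Next I would handle edges of type (2): $p, q \in NC_2(2k)$ share a common interval $\{t,t+1\}$, and $p', q' \in NC_2(2k-2)$ are obtained by deleting it. Here the superposition diagram of $p' \cup q'$ is obtained from that of $p \cup q$ by erasing the common cap $\{t,t+1\}$ (which appears in both pairings, hence forms a closed loop all by itself in the superposition) and closing up the two freed endpoints. Since $\{t,t+1\}$ being common to $p$ and $q$ means it contributes precisely one isolated loop to the picture of $p \vee q$, removing it drops the loop count by exactly $1$, so $|p' \vee q'| = |p \vee q| - 1$, which is of the required form $|p\vee q| \pm 1$.

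The main obstacle I anticipate is the type-(1) case, specifically making rigorous the claim that the non-crossing-neighbor move changes the loop count of the superposition with an \emph{arbitrary} third pairing $q$ by exactly $\pm 1$ and never by $0$ or $\pm 2$. The subtlety is that joining $\{t,t+1\}$ to $\{x,y\}$ is a priori a rearrangement of \emph{two} pairs of $p$, so one must check that the induced change in $p \cup q$ is genuinely a single local surgery rather than two independent ones; the non-crossing constraint and the uniqueness of $p'$ (only one non-crossing completion other than $p$) are exactly what force this. I would make this precise by passing to the planar picture: the pair $\{t,t+1\}$ is a nested innermost interval, $\{x,y\}$ is the pair immediately enclosing the region containing $\{t,t+1\}$ on one side, and the move slides the endpoint $t+1$ along to fuse with the $\{x,y\}$ strand — a single arc gets rerouted, hence a single merge-or-split of loops. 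Alternatively, and perhaps more cleanly, one can deduce the $\pm 1$ (as opposed to $0$) from the Weingarten orthogonality relations \eqref{Wg-relation1} themselves combined with a degree/parity argument, but the direct diagrammatic argument is the one I would write up.
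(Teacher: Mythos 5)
Your proposal is correct, and its skeleton coincides with the paper's: in both arguments $|p\vee q|$ is read as the number of closed loops (equivalently, orbits of the pair of involutions) in the superposition of $p$ and $q$, the type‑(2) edges are handled by noting that a common interval is an isolated loop whose removal drops the count by exactly one, and all the content sits in showing that a non-crossing-neighbor move is a single merge-or-split. Where you genuinely differ is in how that last point is established. The paper works in the involution model: it writes the block of $p\vee q$ containing $\{1,2\}$ and $\{x,y\}$ as an alternating cycle $1 \to^p 2 \to^q \cdots \to^q x \to^p y \to^q \cdots \to^q 1$ and checks that replacing $p(1)=2$, $p(x)=y$ by $p'(1)=y$, $p'(2)=x$ splits one orbit into two (or merges two orbits in the other sub-case). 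You phrase the same bookkeeping as a band surgery on the planar loop diagram. Your framing has the advantage of making the crux visible: an unoriented surgery along two arcs of the \emph{same} loop can a priori either split it or return a single loop, and it is precisely planarity --- an embedded band in the plane carries no half-twist --- that rules out the ``no change'' outcome. You rightly flag this as the main obstacle; note that the paper resolves it only implicitly, by asserting the favorable cyclic order of $1,2,x,y$ along the orbit (with the order $1\to^p 2\to^q\cdots\to^q y\to^p x\to^q\cdots$ the same rerouting would leave the orbit count unchanged), so the planarity argument you sketch is genuinely needed and worth writing out in full. Two small inaccuracies to fix: the move is a resmoothing of \emph{two} arcs of $p$ into two new arcs on the same four endpoints (a saddle move), not the rerouting of a single arc; and $\{x,y\}$ need not be the pair immediately enclosing $\{t,t+1\}$ --- it is any pair of $p$ bounding a common face with that interval, which is exactly what allows the band to be embedded disjointly from the rest of the diagram.
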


\begin{proof}
There are two cases to consider.

{\bf Case a}: Assume $p,q \in NC_2(2k)$ and $(p',q') \in NC_2(2k-2)$.  Then by definition of  $((p,q), (p',q')) \in E_\mc G$, $(p',q')$ is obtained from $(p,q)$ by removing a common interval pair.  It is then immediate that $|p\vee q| = |p' \vee q'| + 1$.  

{\bf Case b}: Assume that $p,q, p', q' \in NC_2(2k)$ for some $k \ge 2$.  By symmetry, we can then assume without loss of generality that $q = q'$ and $p \to p'$.  I.e., $p' $ is a non-crossing neighbor of $p$.  Moreover, by applying a cyclic rotation to the index set $[2k]$, we may without loss of generality assume that $p = \{1,2\}\{x,y\} \cup p''$ and $p' = \{1,y\}\{2,x\} \cup p''$ for some $p'' \in NC_2([2k]\backslash \{1,2,x,y\}) \cong NC_2(2k-4)$.   (This is possible since such rotations preserve the non-crossing structure, as well as the quantity $|p \vee q|$.)  Note in particular that in this case $x$ must be odd and $y$ must be even, in order for our partitions to be non-crossing.

Consider now the partition $p \vee q$.  There are two possible sub-cases to consider.
\begin{enumerate}
\item {\it The blocks $\{1,2\},\{x,y\} \in p$ are connected by a block of $p\vee q$}.  We begin by recalling that there is a canonical identification of the set of pairings $\mc P_2(2k)$ of $[2k]$ with the subset of permutations $p \in S_{2k}$ having no fixed points in $[2k]$ and satisfying $p^2 = 1$.  Under this identification, note that the blocks of $p \vee q$ correspond to the distinct ``orbits'' \[\mc O_{p,q}(i) = \{i \to p(i) \to qp(i) \to pqp(i)\to\ldots \to i\} \qquad (i \in [2k]),\] associated to alternating applications of $p$ and $q$ to points $i \in [2k]$.  

Let us now consider the single orbit $\mc O_{p,q}(1)$ which, by assumption on $p \vee q$, contains the elements $1,2,x,y$.  Since we have $p(1) = 2$ and $p(x) = y$, this orbit has the following structure:
\[\mc O_{p,q}(1) = \{1 \to^p 2 \to^q \ldots \to^q x \to^p y \to^q \ldots \to^q 1\}.
\]
Replacing $p$ by $p'$, we now have $p'(1) = y$, $p'(2) = x$ and $p' = p$ elsewhere.  In particular,  $\mc O_{p,q}(1)$ gets broken into two orbits $\mc O_{p',q}(1)$ and $\mc O_{p',q}(2)$, containing $1$ and $2$, respectively:
\[
\mc O_{p',q}(1)  = \{1 \to^{p'} y \to^q \ldots \to^q 1\}, \quad \& \quad \mc O_{p',q}(2) = \{ 2 \to^{p'} x \to^{q} \ldots \to^q x\}. 
\]
Since all other orbits are unchanged by replacing $p$ with $p'$, we conclude in this case that 
\[
|p' \vee q'| = |p' \vee q| = |p \vee q| +1.
\]
\item  {\it The blocks $\{1,2\}, \{x,y\} \in p$ belong to distinct blocks of $p\vee q$}.  In this case we have two orbits $\mc O_{p,q}(1)$ and $\mc O_{p,q}(x)$ as follows: 
\[\mc O_{p,q}(1) = \{1 \to 2 \to \ldots \to 1\} \quad \& \quad \mc O_{p,q}(x) = \{ x \to y \to \ldots \to x\}.
\]
Now, if we replace $p$ by $p'$, we connect have  $p'(1) = y$ and $p'(2) = x$ and thus the two orbits collapse to the orbit 
\[
\mc O_{p',q}(1) = \{1 \to y \to \ldots \to x \to 2 \to \ldots \to 1\}. 
\]
Since all other orbits are unchanged by replacing $p$ with $p'$, we have in this case 
\[
|p' \vee q'| = |p' \vee q| = |p \vee q| -1.
\] 
\end{enumerate}
\end{proof}

\begin{remark}
Using Proposition \ref{prop:parity}, it is relatively straightforward to see that we always have the lower bound \[L(p, q) \ge 2k-|p\vee q| \qquad (p,q \in NC_2(2k).\]  Indeed, to travel from $(p,q)$ to $(\emptyset, \emptyset)$ along a path in $\mc G$, one has to succesively delete $k$ common interval pairs from $p$ and $q$.   Each removal of a loop constitutes one edge along this path, and we require at least $k-|p\vee q|$ more edge traversals on this path to form a total of $k$ common intervals, yielding a total path length of at least $k + (k-|p\vee q|) = 2k-|p \vee q|$.  

Based on this observation, it is tempting to guess that we might always have equality in the above inequality.  This in fact turns out to be false in some cases, as can be seen from Example \ref{ex:CS-fail}. 
\end{remark}

The following corollary of Proposition \ref{prop:parity} will be of use in the  proof of Theorem \ref{thm:Laurent-series}.  

\begin{corollary} \label{length-parity}
For each vertex $(p,q) \ne (\emptyset, \emptyset)$ in $\mc G$, the length of any directed path from $(p,q)$ to $(\emptyset, \emptyset)$ has the same parity as $|p \vee q|$.  In particular, the length of any such directed path is of the form $L(p,q) + 2r$ for some $r \in \N_0$. 
\end{corollary}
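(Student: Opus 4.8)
The plan is to deduce Corollary~\ref{length-parity} directly from the parity identity \eqref{parity} established in Proposition~\ref{prop:parity}. The key observation is that every edge of $\mc G$ changes the quantity $|p \vee q|$ by exactly $\pm 1$, and the terminal vertex $(\emptyset,\emptyset)$ has the value $|\emptyset \vee \emptyset| = 0$ (using the convention $NC_2(0) \times NC_2(0) = \{(\emptyset,\emptyset)\}$, where the empty join has zero blocks). So if $(p,q) = (p_0,q_0) \to (p_1,q_1) \to \cdots \to (p_N,q_N) = (\emptyset,\emptyset)$ is any directed path of length $N$, then setting $a_j = |p_j \vee q_j|$ we have $a_{j} = a_{j-1} \pm 1$ for each $j$, hence $a_j \equiv a_{j-1} + 1 \pmod 2$. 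Summing these congruences telescopes to $a_N \equiv a_0 + N \pmod 2$, i.e. $0 \equiv |p \vee q| + N \pmod 2$, which gives $N \equiv |p \vee q| \pmod 2$. This is precisely the claim that the length of any directed path from $(p,q)$ to $(\emptyset,\emptyset)$ has the same parity as $|p\vee q|$.

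For the second assertion, I would apply the first part to the specific geodesic realizing $L(p,q)$: since $L(p,q) \equiv |p \vee q| \pmod 2$ and every other directed path has length $N \equiv |p \vee q| \equiv L(p,q) \pmod 2$, and every such $N$ satisfies $N \ge L(p,q)$ by definition of the geodesic, we can write $N = L(p,q) + 2r$ with $r = (N - L(p,q))/2 \in \N_0$. One small point worth stating explicitly: Proposition~\ref{prop:connected} guarantees that at least one directed path from $(p,q)$ to $(\emptyset,\emptyset)$ exists, so $L(p,q)$ is well-defined and the set of path lengths is nonempty.

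I do not anticipate any genuine obstacle here — the corollary is a routine telescoping consequence of Proposition~\ref{prop:parity}, and the only thing to be careful about is the bookkeeping at the endpoint, namely recording that $|\emptyset \vee \emptyset| = 0$ so that the parity of the path length matches $|p \vee q|$ rather than its complement. The argument is short enough that I would present it in full rather than merely sketching it.
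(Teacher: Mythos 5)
Your proposal is correct and follows essentially the same argument as the paper: both define the function $i \mapsto |p_i \vee q_i|$ along a path, use Proposition \ref{prop:parity} to see it changes by $\pm 1$ at each step, and telescope from $|p\vee q|$ down to $0$ to conclude the path length has the same parity as $|p \vee q|$. The second assertion is then handled exactly as you describe.
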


\begin{proof}
Consider any path $\mc P = (p_0,q_0)(p_1,q_1)\ldots (p_{\ell(\mc P)},q_{\ell(\mc P)})$ in $\mc G$ of length $\ell(\mc P)$, where $(p_0,q_0) =(p,q)$ and $ (p_{\ell(\mc P)},q_{\ell(\mc P)})= (\emptyset, \emptyset)$.    Consider also the function \[f_\mc P: \{0, \ldots, \ell(\mc P)\} \to \N_0; \qquad  f_\mc P(i) = |p_i \vee q_i|, \]
where we define $|\emptyset \vee \emptyset| = 0$.  For any such $\mc P$, we  have 
\begin{align} \label{parity-eqn}
f_\mc P(0) = |p \vee q|, \quad f_\mc P (\ell(\mc P)) = 0, \quad  \& \quad f_\mc P(i+1) = f_\mc P(i) \pm 1 \qquad (0 \le i \le \ell(\mc P) -1),
\end{align}
where the last  set of equalities follows from Proposition \ref{prop:parity}.  In particular, these equalities taken together imply that if $|p \vee q|$ is even (respectively odd) $\ell (\mc P)$ must also be even (respectively odd). 
\end{proof}

\subsection{Weingarten subgraphs} \label{sgr}

In this subsection we describe a connected subgraph $\mc H$ of the Weingarten graph $\mc G$. It has the same vertex set $V_\mc H = V_\mc G$, but fewer edges.  More specifically, to each vertex $(p,q)$ different from $(\emptyset, \emptyset )$, we associate one and only one 
Weingarten orthogonality relation of the form \eqref{Wg-relation1} or \eqref{Wg-relation2} with the property that it involves a
vertex $(p',q')$ such that the following conditions are satisfied:
\begin{enumerate}
\item[($\mc H1$)] \label{H1} $((p,q),(p',q')) \in E_{\mc G}$
\item[($\mc H2$)] \label{H2}$L(p',q')=L(p,q)-1$
\end{enumerate}
By Proposition \ref{prop:connected} it is always possible to make such a choice.   
For each $(p,q)$, the above choice of edge $((p,q),(p',q'))$ generally produces several edges $((p,q),(p_1,q_1))$ according to equations \eqref{Wg-relation1}--\eqref{Wg-relation2} (the edge $((p,q),(p',q'))$ being one of those edges).  The edge set $E_\mc H \subset E_\mc G$  is then defined to be collection of
all edges $((p,q),(p_1,q_1))$ arising in the above discussion. It is clear that $\mc H$ is a subgraph of $\mc G$, and we  call $\mc H$ a {\it Weingarten subgraph} of $\mc G$.  In particular, the Weingarten orthogonality relation for a given vertex $(p,q) \in V_\mc H$ chosen when defining $\mc H$ is expressed concisely in terms of $\mc H$-data as 
\begin{align}\label{HE-meaning}
d\Wg_d(p,q) + \sum_{\substack{((p,q),(p_1,q_1)) \in E_\mc H \\ p_1,q_1 \in NC_2(2k)}}\Wg_d(p_1,q_1)  =  \delta_{\{t,t+1\} \in p}\delta_{\{t,t+1\} \in q}\Wg_d(\tilde{p}_1, \tilde{q}_1),
\end{align}
where $((p,q),(\tilde{p}_1,\tilde{q}_1)) \in E_\mc H $ is the unique edge with $\tilde{p}_1, \tilde{q}_1 \in NC_2(2k-2)$ (if it exists).

It is important to note, however, that the Weingarten subgraph $\mc H$ is not uniquely defined. There are many graphs that fulfill the above definition, depending on the choices of orthogonality relations that are made.  But for the purpose of the forthcoming statements, we need to make a choice.  Surprisingly, the choice of Weingarten subgraph $\mc H$ does not affect our statements or proofs (and to our mind, this is a highly non-trivial fact from a combinatorial point of view, although it follows naturally from our analysis). 

Before stating our main result, we need one more definition in order to describe the Laurent coefficients of $\Wg_d(p,q)$.

\begin{definition} \label{m_r}
Fix a Weingarten subgraph $\mc H \subset \mc G$.  For each vertex $(p,q) \in V_{\mc H}$ and each $r \in \N_0$, we denote by $m_r(p,q)$ the number of directed paths from $(p,q)$ to $(\emptyset, \emptyset)$ of length $L(p,q)+2r$ that are contained in $\mc H$.
\end{definition}

\begin{remark}
A priori, one might expect that the number  $m_r(p,q)$ depends on our choice of Weingarten subgraph $\mc H$, however, we shall see in Corollary \ref{H-indep} that this is not the case. 
\end{remark}

Let us now state the main theorem of the paper.

\begin{theorem} \label{thm:Laurent-series}
Fix once and for all a Weingarten subgraph $\mc H \subset \mc G$ and fix $p,q \in NC_2(2k)$  Then the Weingarten function $d \mapsto \Wg_d(p,q)$ admits the following absolutely convergent Laurent series expansion    
\begin{align}\label{laurent-expansion}
\Wg_{d}(p,q) = (-1)^{|p \vee q| + k}\sum_{r \ge 0} m_r(p,q)d^{-L(p,q) - 2r} \qquad \Big(|d| > 2\cos\big(\frac{\pi}{k+1}\big)\Big),
\end{align}
where $L(p,q)  \in \N_0$ is the distance from $(p,q)$ to $(\emptyset, \emptyset)$ in the Weingarten graph $\mc G$, and $m_r(p,q)$ is the number of paths of length $L(p,q)+2r$ in $\mc H$ as defined in Definition \ref{m_r}.

In particular, the leading order  term of $\Wg_{d}(p,q)$ is given by
\[
\Wg_d(p,q) \sim m_0(p,q)(-1)^{k + |p\vee q|}d^{-L(p,q)} \ne 0 \qquad (|d| \to \infty). 
\]
\end{theorem}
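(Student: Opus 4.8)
The plan is to prove the Laurent expansion \eqref{laurent-expansion} by setting up an induction on $L(p,q)$, using the Weingarten orthogonality relation \eqref{HE-meaning} attached to each vertex of the chosen subgraph $\mc H$ as the recursion that drives the induction, together with Corollary \ref{length-parity} and Proposition \ref{prop:parity} to control the parity of exponents and the sign $(-1)^{|p\vee q|+k}$. The base cases are the vertices with $L(p,q)$ small: when $k = 0$ we have $\Wg_d(\emptyset,\emptyset) = 1$ (an empty product/trivial Gram matrix), and more generally any $(p,q)$ with $L(p,q) = k$ sitting on a short geodesic can be handled directly; the only genuinely new feature at a general vertex is that the right-hand side of \eqref{HE-meaning} has strictly smaller $k$ and every $(p_1,q_1)$ appearing on the left with $p_1,q_1 \in NC_2(2k)$ satisfies $L(p_1,q_1) = L(p,q) - 1$ by condition $(\mc H2)$ (for the chosen defining edge) — though for the \emph{other} edges produced by that same relation one only knows $L(p_1,q_1) \ge L(p,q) - 1$, and here is where the parity corollary rescues us: combined with $|p_1 \vee q_1| = |p \vee q| \pm 1$, any such vertex contributes at exponents congruent to $-L(p,q) + 1 \pmod 2$, which after multiplying by $d$ and moving to the other side lines up with the $d^{-L(p,q)-2r}$ terms.

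Concretely, I would fix a vertex $(p,q) \in NC_2(2k) \times NC_2(2k)$ with $k \ge 1$, invoke the relation \eqref{HE-meaning} to write
\begin{align*}
\Wg_d(p,q) = \frac{1}{d}\Big( \delta_{\{t,t+1\}\in p}\delta_{\{t,t+1\}\in q}\Wg_d(\tilde p_1,\tilde q_1) - \sum_{\substack{((p,q),(p_1,q_1))\in E_\mc H\\ p_1,q_1 \in NC_2(2k)}} \Wg_d(p_1,q_1)\Big),
\end{align*}
then substitute the inductively known Laurent series for each term on the right. Each $\Wg_d(p_1,q_1)$ (with $p_1,q_1\in NC_2(2k)$) is, by induction, $(-1)^{|p_1\vee q_1|+k}\sum_{s\ge 0} m_s(p_1,q_1) d^{-L(p_1,q_1)-2s}$; since $L(p_1,q_1)$ has the same parity as $|p_1\vee q_1|$, and the defining edge has $L(p_1,q_1) = L(p,q)-1$, one checks that $(-1)^{|p_1\vee q_1|+k} = -(-1)^{|p\vee q|+k}$ for every contributing vertex, so the overall sign of each term in $-\frac1d\sum(\cdots)$ is $(-1)^{|p\vee q|+k}$, and dividing by $d$ shifts the smallest exponent to $-L(p_1,q_1)-1 = -L(p,q)$ or larger. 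The $\tilde p_1, \tilde q_1$ term lies in $NC_2(2k-2)$ with $|\tilde p_1 \vee \tilde q_1| = |p\vee q| - 1$ and $L(\tilde p_1,\tilde q_1) = L(p,q)-1$ (it is the deletion edge and by $(\mc H2)$), and $(-1)^{(k-1)+(|p\vee q|-1)} = (-1)^{|p\vee q|+k}$, so it contributes with the same sign — and dividing by $d$ again produces $d^{-L(p,q)-2r}$ terms. Collecting, the coefficient of $d^{-L(p,q)-2r}$ becomes exactly $(-1)^{|p\vee q|+k}$ times the total number of length-$(L(p,q)+2r)$ paths in $\mc H$ from $(p,q)$ to $(\emptyset,\emptyset)$, i.e. $m_r(p,q)$, since every such path factors uniquely through one of the edges appearing in the relation at $(p,q)$. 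For $r=0$ the count $m_0(p,q)$ is positive because a geodesic exists (Proposition \ref{prop:connected} plus condition $(\mc H2)$ guarantees $\mc H$ contains one), giving the stated nonvanishing leading term.

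The convergence statement requires a separate (non-inductive) input: the series must converge on $|d| > 2\cos\big(\frac{\pi}{k+1}\big)$. For this I would not rely on the recursion but instead recall that $\Wg_d(p,q)$ is a rational function of $d$ whose poles occur only among the zeros of $\det G_d$, and it is classical (cf. the references \cite{Ba96, DiFr98, BrKi16} cited in the excerpt) that these zeros are contained in $\{2\cos(\pi/n): 2 \le n \le k+1\} \subset [-2\cos(\pi/(k+1)), 2\cos(\pi/(k+1))]$; hence $\Wg_d(p,q)$ is holomorphic on the annulus $|d| > 2\cos(\pi/(k+1))$ and, vanishing at $\infty$, has there a convergent Laurent expansion in $d^{-1}$. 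The formal series produced by the induction must agree with this genuine Laurent expansion (two Laurent series agreeing as formal power series in $d^{-1}$ after clearing the recursion), so it converges on the same annulus, and its coefficients are the integers $(-1)^{|p\vee q|+k}m_r(p,q)$.

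The main obstacle, and the step I would spend the most care on, is showing that the count of paths is genuinely independent of which particular edges get "activated" — that is, that when I expand the right-hand side of \eqref{HE-meaning} the bookkeeping of paths through \emph{all} edges $((p,q),(p_1,q_1)) \in E_\mc H$ (not merely the single distinguished one satisfying $(\mc H2)$) still reassembles into $m_r(p,q)$ as defined by \emph{counting paths in $\mc H$}. The subtlety is that for a non-distinguished neighbor one only has $L(p_1,q_1) \ge L(p,q) - 1$, so a priori such a term could contaminate the coefficient of $d^{-L(p,q)}$ itself; the parity corollary forces its contribution to sit at exponents $\ge -L(p,q)+1$ before dividing by $d$, hence $\ge -L(p,q)$ after, with the worst case $-L(p,q)$ occurring precisely when $L(p_1,q_1) = L(p,q)-1$, i.e. exactly when $(p_1,q_1)$ lies on a geodesic. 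One must verify that the resulting term is nevertheless correctly absorbed into $m_0(p,q)$, which amounts to the statement that a shortest path from $(p,q)$ through such a neighbor is still a shortest path overall — this is automatic from the definition of $L$, but tracking it cleanly through the recursion, and confirming that \emph{every} path in $\mc H$ from $(p,q)$ arises this way exactly once, is the combinatorial heart of the argument.
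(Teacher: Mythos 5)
Your proposal correctly identifies the two key mechanisms — the sign bookkeeping via Proposition \ref{prop:parity} (after re-signing, every term in the orthogonality relation \eqref{HE-meaning} contributes with the same sign, which is why the path counts are genuinely positive) and the fact that the Laurent coefficients are pinned down by rationality and the location of the poles of $\Wg_d(p,q)$. However, the organizing framework of an induction on $L(p,q)$ has a genuine gap that cannot be repaired within that framework. The relation \eqref{HE-meaning} at a vertex $(p,q)$ involves neighbors $(p_1,q_1) \in NC_2(2k)\times NC_2(2k)$ for which one only knows $L(p_1,q_1) = L(p,q) \pm 1$; the case $L(p_1,q_1) = L(p,q)+1$ is not excluded by parity, so the inductive hypothesis is simply unavailable for those terms. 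Worse, the restriction of $\mc H$ to a fixed level $NC_2(2k)\times NC_2(2k)$ contains directed cycles (the bidirected edges visible already in Example \ref{ex:workout}), so there are infinitely many paths from $(p,q)$ to $(\emptyset,\emptyset)$, the Laurent series has infinitely many nonzero terms, and no well-founded induction on $L$ (or on any ordering of the vertices within a level) can terminate: substituting ``the inductively known Laurent series'' for each neighbor leads you back to $(p,q)$ itself after finitely many steps. Your fallback — that the formal series produced by the induction must agree with the genuine Laurent expansion — presupposes that the induction produces a well-defined formal series in the first place, which it does not.

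What is actually needed, and what the paper does, is an analytic limiting argument in place of the induction: fix an integer $d \ge k+1$, iterate the re-signed recursion $\widetilde{\Wg}_d(p,q) = d^{-1}\sum_{((p,q),(p_1,q_1))\in E_{\mc H}} \widetilde{\Wg}_d(p_1,q_1)$ exactly $s = L(p,q)+2N$ times, observe that each completed path of length $\ell$ terminating at $(\emptyset,\emptyset)$ contributes $d^{-\ell}$ (yielding the partial sum $\sum_{r=0}^{N} m_r(p,q)\,d^{-L(p,q)-2r}$), and then kill the remainder: it consists of at most $k^{s}$ terms $\widetilde{\Wg}_d(p_s,q_s)$, uniformly bounded since the $(p_s,q_s)$ range over the finite set $\bigcup_{j\le k} NC_2(2j)^2$, each weighted by $d^{-s}$, so it is $O\bigl((k/d)^{s}\bigr) \to 0$. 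This identifies the Laurent series at the integer points $d\ge k+1$, and uniqueness of Laurent coefficients on the annulus $|d| > 2\cos\bigl(\frac{\pi}{k+1}\bigr)$ (where you correctly located the poles) then gives the theorem. Note also that your claim that the deletion edge to $(\tilde p_1,\tilde q_1)$ satisfies $L(\tilde p_1,\tilde q_1) = L(p,q)-1$ ``by $(\mc H2)$'' overreaches: $(\mc H2)$ is imposed only on the single distinguished edge used to select the relation, not on every edge that relation produces; the correct statement is just that all path lengths share the parity of $L(p,q)$ by Corollary \ref{length-parity}, which is all the final count requires.
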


\begin{corollary} \label{H-indep} 
The number $m_r(p,q)$ of paths of length $L(p,q) +2r$ from a vertex $(p,q)$ to $(\emptyset,\emptyset)$ in any Weingarten subgraph $\mc H \subseteq \mc G$  is independent of the choice of $\mc H$.
\end{corollary}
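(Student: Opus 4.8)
\textbf{Proof proposal for Corollary~\ref{H-indep}.} The plan is to deduce the $\mc H$-independence of the numbers $m_r(p,q)$ directly from Theorem~\ref{thm:Laurent-series}, rather than by a combinatorial argument internal to the graph $\mc G$. The key observation is that Theorem~\ref{thm:Laurent-series} produces, for \emph{any} fixed choice of Weingarten subgraph $\mc H$, an absolutely convergent Laurent series expansion of the \emph{same} analytic function $d \mapsto \Wg_d(p,q)$ on the \emph{same} annulus $|d| > 2\cos\big(\frac{\pi}{k+1}\big)$. Since $\Wg_d(p,q)$ does not depend on $\mc H$ (it is defined purely in terms of the Temperley-Lieb algebra / the inverse Gram matrix), and since $L(p,q)$ is also intrinsic to $\mc G$ and not to $\mc H$, the only data in the expansion \eqref{laurent-expansion} that could conceivably vary with $\mc H$ are the coefficients $m_r(p,q)$.

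First I would invoke the uniqueness of Laurent coefficients: a function holomorphic on an annulus $\{|d| > R\}$ (here we work with the variable $d$, so the relevant expansions are in powers of $1/d$, which is legitimate since $\Wg_d(p,q) \to 0$ as $|d| \to \infty$) has a unique Laurent expansion there, so the coefficient of each power $d^{-L(p,q)-2r}$ is determined by the function alone. Concretely, suppose $\mc H$ and $\mc H'$ are two Weingarten subgraphs, giving rise via Theorem~\ref{thm:Laurent-series} to coefficients $m_r(p,q)$ and $m_r'(p,q)$ respectively. Then for all $|d| > 2\cos\big(\frac{\pi}{k+1}\big)$ we have
\[
(-1)^{|p\vee q|+k}\sum_{r \ge 0} m_r(p,q) d^{-L(p,q)-2r} = \Wg_d(p,q) = (-1)^{|p\vee q|+k}\sum_{r\ge 0} m_r'(p,q) d^{-L(p,q)-2r}.
\]
Cancelling the common sign and comparing coefficients of $d^{-L(p,q)-2r}$ for each $r \in \N_0$ forces $m_r(p,q) = m_r'(p,q)$ for all $r$, which is exactly the claim.

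I do not expect any serious obstacle here, since this corollary is genuinely a formal consequence of Theorem~\ref{thm:Laurent-series}; the substantive work lies in proving that theorem. The one point that requires a sentence of care is to confirm that the parameter $L(p,q)$ appearing in the expansions for $\mc H$ and $\mc H'$ is literally the same integer: this is true because $L(p,q)$ is defined as the geodesic distance from $(p,q)$ to $(\emptyset,\emptyset)$ in the full Weingarten graph $\mc G$, not in any particular subgraph, so it is a fixed quantity independent of the subgraph choice. (Equivalently, even if one were to phrase $L(p,q)$ in terms of $\mc H$, condition $(\mc H2)$ forces geodesics in $\mc H$ to have length $L(p,q)$, so the leading exponent agrees; and uniqueness of Laurent expansions then pins down all lower-order coefficients regardless.) With this remark in place, the coefficient-comparison argument above completes the proof.
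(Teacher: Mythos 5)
Your argument is exactly the paper's: the authors also deduce the corollary from the uniqueness of Laurent coefficients of the $\mc H$-independent analytic function $d \mapsto \Wg_d(p,q)$ on the annulus. Your additional remark that $L(p,q)$ is defined in $\mc G$ rather than in $\mc H$ is a correct and harmless elaboration, but the proof is essentially identical to the one in the paper.
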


\begin{proof}
This is an immediate consequence of the uniqueness of the coefficients of a Laurent series expansion for an analytic function on an annulus.
\end{proof}

\begin{example} \label{ex:workout}
As an illustration of Theorem \ref{thm:Laurent-series}, let us compute $\Wg_d(p,q)$, where $p = \{1,4\}\{2,3\}\{5,6\}$, $q = \{1,6\}\{2,5\}\{3,4\} \in NC_2(6)$.  Using downward (resp. upward) facing arches to depict the pairs of $p$ (resp $q$) in the interval $\{1,2, \ldots, 6\} = [6]$, we can graphically represent the pair $(p,q)$ with the following arch diagram.
\begin{equation}
	\begin{tikzpicture}[baseline=(current  bounding  box.center),
			wh/.style={circle,draw=black,thick,inner sep=1mm},
			bl/.style={circle,draw=black,fill=black,thick,inner sep=1mm}]
		\node (1) at (0,0) [bl] {};
		\node (2) at (1,0) [bl] {};
		\node (3) at (2,0) [bl] {};
		\node (4) at (3,0) [bl] {};
		\node (5) at (4,0) [bl] {};
		\node (6) at (5,0) [bl] {};
		
\draw (1) to [bend right] (6);
\draw (2) to [bend right] (5);
\draw (3) to [bend right] (4);

\draw (1) to [bend left] (4);
\draw (2) to [bend left] (3);
\draw (5) to [bend left] (6);

	\end{tikzpicture}
\end{equation}
To compute $\Wg_d(p,q)$, we choose a Weingarten subraph $\mc H \subset \mc G$ and draw the component of $\mc H$ that is relevant to the pair $(p,q)$ we started with.  In this example, we make the following choice for this component of $\mc H$ (here the blue arrows indicate the directed edges that appear and the white nodes connected by a starred arch indicate the choice of interval for the Weingarten orthogonality relation taken in forming $\mc H$):

\begin{equation} \label{graph1}
	\begin{tikzpicture}[baseline=(current  bounding  box.center),
			wh/.style={circle,draw=black,thick,inner sep=.5mm},
			bl/.style={circle,draw=black,fill=black,thick,inner sep=.5mm}, scale = 0.5]
		\node (1) at (-8,0) [bl] {};
		\node (2) at (-7,0) [wh] {};
		\node (3) at (-6,0) [wh] {};
		\node (4) at (-5,0) [bl] {};
		\node (5) at (-4,0) [bl] {};
		\node (6) at (-3,0) [bl] {};
		
\draw (1) to [bend right] (6);
\draw (2) to [bend right] (5);
\draw (3) to [bend right] (4);

\draw (1) to [bend left] (4);
\draw (2) to [bend left] (3);
\draw (5) to [bend left] (6);

\node () at (-9,0.5) {$(p,q)$};		

\node (a) at (-6.5,0.25) {$\star$};

\draw [<->, color=blue]
		(-2.5,0) -- (-0.5,0);

		\node (11) at (0,0) [bl] {};
		\node (12) at (1,0) [bl] {};
		\node (13) at (2,0) [wh] {};
		\node (14) at (3,0) [wh] {};
		\node (15) at (4,0) [bl] {};
		\node (16) at (5,0) [bl] {};
		
\draw (11) to [bend right] (16);
\draw (12) to [bend right] (15);
\draw (13) to [bend right] (14);

\draw (11) to [bend left] (12);
\draw (13) to [bend left] (14);
\draw (15) to [bend left] (16);

\draw [<->, color=blue]
		(5.5,0) -- (7.5,0);

\node (b) at (2.5,0.25) {$\star$};		

		\node (21) at (8,0) [bl] {};
		\node (22) at (9,0) [bl] {};
		\node (23) at (10,0) [bl] {};
		\node (24) at (11,0) [wh] {};
		\node (25) at (12,0) [wh] {};
		\node (26) at (13,0) [bl] {};
		
\draw (21) to [bend right] (26);
\draw (22) to [bend right] (25);
\draw (23) to [bend right] (24);

\draw (21) to [bend left] (22);
\draw (23) to [bend left] (26);
\draw (24) to [bend left] (25);

\draw [->, color=blue]
		(0,-.5) -- (0,-3.5);

\node (c) at (11.5,0.25) {$\star$};	

	
		\node (31) at (0,-4) [bl] {};
		\node (32) at (1,-4) [bl] {};
		\node (33) at (2,-4) [wh] {};
		\node (34) at (3,-4) [wh] {};

\draw (31) to [bend right] (34);
\draw (32) to [bend right] (33);

\draw (31) to [bend left] (32);
\draw (33) to [bend left] (34);

\draw [<->, color=blue]
		(0,-4.5) -- (0,-7.5);

\node (d) at (2.5,-3.75) {$\star$};		


		\node (41) at (0,-8) [bl] {};
		\node (42) at (1,-8) [bl] {};
		\node (43) at (2,-8) [wh] {};
		\node (44) at (3,-8) [wh] {};

\draw (41) to [bend right] (42);
\draw (43) to [bend right] (44);

\draw (41) to [bend left] (42);
\draw (43) to [bend left] (44);

\draw [->, color=blue]
		(0,-8.5) -- (0,-11.5);

\node (e) at (2.5,-7.75) {$\star$};		


		\node (51) at (0,-12) [wh] {};
		\node (52) at (1,-12) [wh] {};

\draw (51) to [bend right] (52);

\draw (51) to [bend left] (52);

\draw [->, color=blue]
		(1.5,-12) -- (3.5,-12);

\node (d) at (0.5,-11.75) {$\star$};		

\node (121) at (4,-12) [bl] {};
		\node (121) at (5.5,-11.5) {$(\emptyset, \emptyset)$};

	\end{tikzpicture}
\end{equation}

From the graph \eqref{graph1}, we immediately see that $L(p,q) = 5$ and there is only one path from $(p,q)$ to $(\emptyset, \emptyset)$ with this length.  Moreover, any path of length $L(p,q) +2r$ from $(p,q)$ to $(\emptyset, \emptyset)$ corresponds to a choice of $0  \le s \le r$ loops to traverse within the $NC_2(6) \times NC_2(6)$-component of $\mc H$, followed by $r-s$ traversals of the single loop in the $NC_2(4) \times NC_2(4)$-component of $\mc H$.  Counting the number of such distinct choices easily gives 
\[
m_r(p,q) = \sum_{s=0}^r 2^s = 2^{r+1}-1 \qquad (r \ge 0),
\]
and  consequently, we get 
\[
\Wg_d(p,q) = (-1)^{|p \vee q| + k}\sum_{r \ge 0} m_r(p,q)d^{-L(p,q) - 2r} = \sum_{r \ge 0} (2^{r+1}-1)d^{-5 - 2r}
\]

\begin{remark}
At this point the reader may object to the fact that in Example \ref{ex:workout}, we did not explain why the graph \eqref{graph1} is indeed a (component of a) Weingarten subgraph $\mc H$.  In particular condition $(\mc H2)$ for $\mc H$ has not been explicitly verified.  In this particular example it is easy to verify this by hand.  Remarkably however, it turns out to follow from the proof of Theorem \ref{thm:Laurent-series} that condition $(\mc H2)$ {\it does not need to be verified}.  Indeed, in Section \ref{on-sg} we shall see that any subgraph $\mc H' \subset \mc G$ constructed according to the rules defining a Weingarten subgraph {\it without insisting on condition $(\mc H2)$} turns out to automatically satisfy condition $(\mc H2)$ anyway.  In particular, $\mc H'$ is automatically a Weingarten subgraph.  This shows that one has considerable ease and flexibility in constructing Weingarten subgraphs.
\end{remark}

\end{example}

Before giving the proof of Theorem \ref{thm:Laurent-series}, we first present our main application of this result, stating that the coefficients of any element of the dual basis associated to the Temperley-Lieb diagram basis $\TL_k(d)$ is non-zero.  

\begin{theorem} \label{thm:non-zero-coefficients}
Let $\hat D_p \in \TL_k(d)$ be the basis element dual to the Temperley-Lieb diagram $D_p$, with $p \in NC_2(2k)$ and loop parameter $d$. 
Then every coefficient of $\hat D_p$ in the $D_q$-basis expansion is generically non-zero, in the sense that $\Wg_d(p,q) = 0$ for at most finitely many $d \in \C$.  More precisely, we have $\Wg_d(p,q) \ne 0$ when
\[
d  \in \R \backslash \Big[-2\cos\big(\frac{\pi}{k+1}\big), 2\cos\big(\frac{\pi}{k+1}\big)\Big] \quad \text{or} \quad |d| \text{ is sufficiently large}.
\]
\end{theorem}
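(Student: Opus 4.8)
The plan is to deduce this directly from the Laurent series expansion in Theorem~\ref{thm:Laurent-series}. That theorem gives
\[
\Wg_d(p,q) = (-1)^{|p \vee q| + k}\sum_{r \ge 0} m_r(p,q)\, d^{-L(p,q) - 2r},
\]
valid and absolutely convergent on the annulus $|d| > 2\cos\big(\frac{\pi}{k+1}\big)$, where the $m_r(p,q)$ are nonnegative integers and, crucially, $m_0(p,q) \ge 1$ (the geodesic path always exists by Proposition~\ref{prop:connected}). So the first step is to factor out the leading power and write
\[
\Wg_d(p,q) = (-1)^{|p \vee q| + k}\, d^{-L(p,q)} \Big( m_0(p,q) + \sum_{r \ge 1} m_r(p,q)\, d^{-2r} \Big),
\]
and observe that $\Wg_d(p,q)$ vanishes on the annulus if and only if the bracketed analytic function $G(d) := m_0(p,q) + \sum_{r\ge 1} m_r(p,q) d^{-2r}$ does.

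For the ``sufficiently large $|d|$'' regime, the argument is immediate: since $m_0(p,q)\ge 1$ and the tail $\sum_{r \ge 1} m_r(p,q)|d|^{-2r}$ tends to $0$ as $|d| \to \infty$ (absolute convergence on the annulus controls the tail), there is an $R$ beyond which $|G(d)| \ge m_0(p,q)/2 > 0$, so $\Wg_d(p,q) \ne 0$. This also recovers the leading-order asymptotic already stated in Theorem~\ref{thm:Laurent-series}.

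For the real regime $d \in \R \setminus [-2\cos\frac{\pi}{k+1}, 2\cos\frac{\pi}{k+1}]$ — which is the substantive part — I would use the positivity of \emph{all} the coefficients. For such $d$ we have $d^{-2r} > 0$ for every $r$, and since every $m_r(p,q) \ge 0$ with $m_0(p,q) \ge 1$ strictly positive, the series $G(d) = \sum_{r \ge 0} m_r(p,q) d^{-2r}$ is a sum of nonnegative reals with at least one strictly positive term, hence $G(d) > 0$. Therefore $\Wg_d(p,q) = (-1)^{|p\vee q|+k} d^{-L(p,q)} G(d) \ne 0$ (its sign is in fact $(-1)^{|p\vee q|+k}\,\mathrm{sgn}(d)^{L(p,q)}$). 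The only point requiring care is that the Laurent expansion is genuinely valid on this entire range: the real interval $\R \setminus [-2\cos\frac{\pi}{k+1}, 2\cos\frac{\pi}{k+1}]$ lies inside the annulus $|d| > 2\cos\frac{\pi}{k+1}$, so Theorem~\ref{thm:Laurent-series} applies verbatim. Finally, the ``at most finitely many $d \in \C$'' clause follows because $\Wg_d(p,q)$, cleared of poles by multiplying through by the relevant Chebyshev-type denominator, is a rational function of $d$ that is not identically zero (its Laurent expansion has nonzero leading coefficient), hence has finitely many zeros.

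The main obstacle is simply that this theorem rests entirely on the positivity statement $m_r(p,q) \ge 0$ (and $m_0 \ge 1$) packaged inside Theorem~\ref{thm:Laurent-series}; granting that input, the present proof is a short extraction. The only genuine verification needed here is the elementary but necessary check that the region claimed in the statement is contained in the domain of convergence of the Laurent series, together with the observation that $m_0(p,q)\ge 1$ because the Weingarten graph is connected (Proposition~\ref{prop:connected}), so the geodesic from $(p,q)$ to $(\emptyset,\emptyset)$ contributes at least one path.
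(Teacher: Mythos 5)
Your proposal is correct and follows essentially the same route as the paper: extract the result from the Laurent expansion of Theorem \ref{thm:Laurent-series}, using that all coefficients $m_r(p,q)$ are nonnegative with $m_0(p,q)\ge 1$ so that for real $d$ outside $[-2\cos\frac{\pi}{k+1},2\cos\frac{\pi}{k+1}]$ all terms share a sign and the sum cannot vanish, and that the leading term dominates as $|d|\to\infty$. Your added remarks (the containment of the real regime in the annulus of convergence, and the finiteness of complex zeros via rationality) are harmless elaborations of what the paper leaves implicit.
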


\begin{proof}
If $d  \in \R \backslash \Big[-2\cos\big(\frac{\pi}{k+1}\big), 2\cos\big(\frac{\pi}{k+1}\big)\Big]$, then Theorem \ref{thm:Laurent-series} implies that \[|\Wg_d(p,q)| = \sum_{r \ge 0}m_r(p,q)|d|^{-L(p,q)-2r} > 0  \quad \text{since $m_0(p,q) \ne 0$}.\] For the case of $|d| \to \infty$, Theorem \ref{thm:Laurent-series} gives the asymptotic \[\Wg_d(p,q)\sim m_0(p,q)(-1)^{k + |p\vee q|}d^{-L(p,q)} \ne 0,\] and we are done.    
\end{proof}

We now present the proof of Theorem \ref{thm:Laurent-series}.

\begin{proof}[Proof of Theorem \ref{thm:Laurent-series}]

To begin with, we note that for any  $k \in \N_0$ and any $p,q \in NC_2(2k)$, the function $d \mapsto \Wg_{d}(p,q)$ is a rational function in the variable $d \in \C$.  In fact, the determinant of the gram matrix $G_{d}$ is well-known (see for example in \cite[Theorem 1]{DiFr98}), and it follows from that result and the definition of the matrix inverse in terms of cofactors that the poles of $\Wg_{d}(p,q)$ always lie in the interval $\Big[-2\cos\big(\frac{\pi}{k+1}\big), 2\cos\big(\frac{\pi}{k+1}\big)\Big]$.  As a consequence, the rational function $d \mapsto \Wg_{d}(p,q)$ is analytic on the annulus $\mathbb A = \{d \in \C: |d| > 2\cos\big(\frac{\pi}{k+1}\big)\}$ and  has an absolutely convergent Laurent series expansion there.  

To determine this Laurent series, we recall from elementary complex variable theory that it suffices to determine the evaluation of the Laurent series for $\Wg_d(p,q)$ along a sequence of points in $\mathbb A$ tending to infinity.    For our purposes, it will be convenient to take the sequence of points $\{d \in \N: d \ge  k+1\}$, whenever $(p,q) \in NC_2(2k)^2$.

\begin{notation}
In order to simplify some notation, we will work with the following re-signed Weingarten functions
\[\widetilde{\Wg}_d(p,q) := (-1)^{k+ |p \vee q|}\Wg_d(p,q).\]
\end{notation}

Let us now fix once and for all a Weingarten subgraph $\mc H \subset \mc G$, $k \ge 1$, $p,q \in NC_2(2k)$ and $d \in \N$ with $d \ge k+1$.  Consider the distinguished Weingarten orthogonality relation \eqref{HE-meaning} associated to $(p,q)$ by $\mc H$.  Multiplying that equation by $(-1)^{k+|p \vee q|}d^{-1}$ and rearranging terms, we obtain the following equivalent equation (with the help of Proposition \ref{prop:parity}):
\begin{align} \label{two}
\widetilde{\Wg}_d(p,q)&= d^{-1} \sum_{((p,q),(p_1,q_1)) \in E_\mc H} \widetilde{\Wg}_d(p_1,q_1).
\end{align}

We note that the number of terms appearing in \eqref{two} is at most $k$.  (This follows from the fact any $z \in NC_2(2k)$ can have at most $k-1$ non-crossing neighbors associated to a fixed interval $\{t,t+1\} \in z$.  In particular, the degree any vertex $(p,q) \in V_\mc H$ is at most $k$).  Note also that for each $(p_1,q_1)$ appearing in \eqref{two}, we have $L(p_1,q_1) = L(p,q) \pm 1$ with at least one vertex $(p_1,q_1)$ satisfying $L(p_1,q_1) = L(p,q) - 1$.

We now repeatedly apply equation \eqref{two} to each term on the right hand side of \eqref{two}.  In particular, after $1 \le s < L(p,q)$ iterations, equation \eqref{two} gets transformed into the equation
\begin{align}\label{three}
\widetilde{\Wg}_d(p,q)&= d^{-s} \sum_{((p_{s-1},q_{s-1}),(p_s,q_s)) \in E_\mc H} \widetilde{\Wg}_d(p_s,q_s),
\end{align}
where the edges  $((p_{s-1},q_{s-1}),(p_s,q_s)) \in E_\mc H$ appearing above correspond to all the paths $(p,q)(p_1,q_1)\ldots (p_{s-1},q_{s-1})(p_s,q_s)$ of length $s$ in $\mc H$ with initial point $(p,q)$ that arise from the edge choices made in defining $\mc H$. Note that when $s=L(p,q)$, this will be the first time that we will produce a path $(p,q)(p_1,q_1)\ldots (p_{s-1},q_{s-1})(p_s,q_s)$ in $\mc H$ of length $L(p,q)$ with endpoint $(p_s,q_s) = (\emptyset,\emptyset)$.  Recalling that $m_0(p,q) \in \N$ denotes the number of such paths in $\mc H$, we can equivalently write \eqref{three} as 
\begin{align} \label{four}
\widetilde{\Wg}_d(p,q)&= m_0(p,q) d^{-L(p,q)} +  d^{-L(p,q)}\sum_{\substack{((p_{s-1},q_{s-1}),(p_s,q_s)) \in E_\mc H \\ (p_s,q_s) \ne (\emptyset, \emptyset)}} \widetilde{\Wg}_d(p_s,q_s),
\end{align}  
Continuing to apply \eqref{two} to the remaining $\widetilde{\Wg}_d$ terms on the right hand side, we inductively obtain the following general formula after  $s= L(p,q) + 2N$ iterations (with $N \in \N_0$).  
\begin{align} \label{five}
\widetilde{\Wg}_d(p,q)&=\sum_{r=0}^N m_r(p,q) d^{-L(p,q)-2r}+  d^{-L(p,q)-2N}\sum_{\substack{((p_{s-1},q_{s-1}),(p_s,q_s)) \in E_\mc H \\ (p_s,q_s) \ne (\emptyset, \emptyset)}} \widetilde{\Wg}_d(p_{s},q_{s}),
\end{align}  
where $m_r(p,q) \in \N_0$  is the number of directed paths of  length $L(p,q) +2r$ from $(p,q)$ to $(\emptyset, \emptyset)$ contained in $\mc H$. 
Note in particular that no paths of length $L(p,q) +2r+1$ exist from $(p,q)$ to $(\emptyset,\emptyset)$, thanks to Corollary \ref{length-parity}.

From equation \eqref{five}, it follows that we obtain the desired formula 
\[
\widetilde{\Wg}_d(p,q)=\sum_{r=0}^\infty m_r(p,q) d^{-L(p,q)-2r} \qquad (d \ge k+1),
\] provided we can show that 
\[
\lim_{N\to \infty} d^{-L(p,q)-2N}\Big|\sum_{\substack{((p_{s-1},q_{s-1}),(p_s,q_s)) \in E_\mc H \\ (p_s,q_s) \ne (\emptyset, \emptyset)}} \widetilde{\Wg}_d(p_{s},q_{s})\Big| = 0.
\]
To this end, note that after $s = L(p,q)+ 2N$ iterations there are at most $k^s$ terms being summed in the above remainder term.  Moreover, the set $\{\widetilde{\Wg}_d(p_{s},q_{s})\}$  of numbers being summed is uniformly bounded in $s$, since the $q_s, p_s$ that give rise to these terms are constrained to  live in $\bigcup_{0 \le j \le k} NC_2(2j)$.  Thus the H\"older inequality gives
\[
d^{-L(p,q)-2N}\Big|\sum_{\substack{((p_{s-1},q_{s-1}),(p_s,q_s)) \in E_\mc H \\ (p_s,q_s) \ne (\emptyset, \emptyset)}} \widetilde{\Wg}_d(p_{s},q_{s})\Big| \le \Big(\frac{k}{d}\Big)^{L(p,q)+2N} \sup_{(p_s,q_s) \ne (\emptyset, \emptyset)}|\widetilde{\Wg}_d(p_{s},q_{s})| \to 0,
\]  completing the proof.
\end{proof}

\begin{remark}
The idea of counting paths as it is done in the proof of Theorem \ref{thm:Laurent-series} is also heavily used in subfactor theory, for example for the computation of dimensions of relative commutants.  See for example \cite{GoDeJo89, JoSu97}.  We are not able at this point to interpret the numbers $m_r(p,q)$ that we introduce as the dimension of an object of something alike, but it is natural to speculate that there is one such interpretation.
 
Our graphs are different from principal graphs of subfactors because they are oriented, but we believe that there is a relation that deserves further investigation. The fact that the choice of the Weingarten subgraph does not affect the computation of the Weingarten function possibly hints at the fact that there is a ``type'' of graph associated to a pair of non-crossing partitions, possibly related to known series of principal graphs. Note also that just as for the graphs arising in subfactor theory, there seems to be some duality present in our graphs. 
\end{remark}

\subsection{On the problem of selecting a Weingarten subgraph} \label{on-sg}
A natural question that arises from the above analysis is: {\it In practice, how does one efficiently select a Weingarten subgraph $\mc H$?}  In particular, condition $(\mc H2)$ in the definition of a Weingarten subgraph $\mc H \subset \mc G$ seems to require ``global information'' about the graph $\mc G$ in order to select orthogonality relations that produce edges that decrease the distance from a vertex $(p,q)$ to $(\emptyset,\emptyset)$.  The following proposition asserts the highly non-obvious fact that condition $(\mc H2)$ essentially ``comes for free'' in the construction of a Weingarten subgraph $\mc H$.   This has profound applications for the practical implementation of Theorem \ref{thm:Laurent-series}, since it implies that $\mc H$ can be constructed from purely local information about vertices.

\begin{proposition}\label{noH2}
Let $(p,q) \ne (\emptyset, \emptyset)$ be a vertex in the Weingarten graph $\mc G$.  For any choice of Weingarten orthogonality relation of the form \eqref{Wg-relation1}-\eqref{Wg-relation2} at $(p,q)$, the resulting collection of edges $\{((p,q), (p_1,q_1))\} \subset E_\mc G$ associated to this relation contains at least one element $((p,q), (p_1,q_1))$ satisfying 
\[L(p_1,q_1)  = L(p,q) -1. \]
In particular, in the process of selecting a Weingarten graph $\mc H \subset \mc G$, condition $(\mc H2)$ is automatically satisfied for any choice of Weingarten orthogonality relation. 
\end{proposition}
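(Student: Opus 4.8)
The plan is to reduce the statement to a purely graph-theoretic assertion about geodesics in $\mc G$ and then exploit the structure of the Weingarten orthogonality relation \eqref{E-meaning}. Fix $(p,q) \ne (\emptyset, \emptyset)$ and suppose we have chosen an interval $\{t,t+1\}$ belonging to (say) $p$, giving the relation \eqref{E-meaning} with summands $\{((p,q),(p_1,q_1))\}$ ranging over the non-crossing neighbors $q \to q_1$ (together with the deletion edge $((p,q),(\tilde p_1, \tilde q_1))$ when $\{t,t+1\}$ is also a block of $q$). I want to show that at least one of these resulting edges lands on a vertex that is strictly closer to $(\emptyset,\emptyset)$. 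The key device is to take a geodesic $\mc P$ from $(p,q)$ to $(\emptyset,\emptyset)$ of length $L(p,q)$ and examine its \emph{first} edge $((p,q),(p^*,q^*))$. This first edge is of one of three types: a $p$-neighbor move, a $q$-neighbor move, or a common-interval deletion. The crux of the argument is to show that no matter which interval $\{t,t+1\}$ we picked for our orthogonality relation, the edge set it produces must ``see'' a vertex at distance $L(p,q)-1$ — i.e. the freedom in choosing the orthogonality relation cannot destroy the existence of a distance-decreasing edge.

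First I would establish the following interchange lemma: if $\{t,t+1\}$ and $\{t',t'+1\}$ are two distinct intervals available for orthogonality relations at $(p,q)$, then applying the relation at $\{t',t'+1\}$ and then (on each resulting vertex) a relation at a descendant of $\{t,t+1\}$ produces, after two steps, the same multiset of vertices (up to the bookkeeping of loop-deletions) as doing it in the opposite order. Concretely, non-crossing-neighbor operations at disjoint intervals commute, and commute appropriately with deletions, because the combinatorial modifications they perform act on disjoint portions of $[2k]$. This is the analogue, at the level of the graph $\mc G$, of the fact that the orthogonality relations for $O_d^+$ at different indices commute. Granting this, I would then argue: pick a geodesic $\mc P$ whose first edge uses a deletion of some common interval $\{s,s+1\}$ if possible, or otherwise a neighbor move; by the interchange lemma, the orthogonality relation we actually chose at $\{t,t+1\}$ can be ``completed'' by a further geodesic-respecting step, so one of its output edges lies on a geodesic. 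More carefully, I would show that the distance function $L$ satisfies a one-step triangle-type inequality with respect to the full bundle of edges produced by any single orthogonality relation: $L(p,q) \le 1 + \min_{(p_1,q_1)} L(p_1,q_1)$ is automatic, and the reverse — that the min is achieved with equality — follows because the \emph{entire} set of $\mc G$-edges out of $(p,q)$ is covered by finitely many orthogonality relations, each relation's edge-bundle is ``geodesically closed'' in the sense just described, and a geodesic's first edge must belong to one such bundle; the interchange lemma lets us transport that geodesic-first-edge into whichever bundle we selected.

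The main obstacle I anticipate is precisely making the interchange/transport argument watertight when deletion edges are involved, because deleting a common interval changes the ambient set $[2k] \to [2k-2]$ and one must check that ``the neighbor structure relevant to $\{t,t+1\}$'' descends correctly to $NC_2(2k-2)$ and that distances in $\mc G$ behave well under this descent (this is morally Case a of Proposition \ref{prop:parity}, iterated). A clean way around this may be to induct on $L(p,q)$: the base case $L(p,q)=1$ forces $(p,q)$ to be a single common interval and every orthogonality relation there immediately yields $(\emptyset,\emptyset)$; for the inductive step, take a geodesic, peel off its first edge to reach some $(p',q')$ with $L(p',q')=L(p,q)-1$, apply the inductive hypothesis there, and use the interchange lemma to pull the distance-decreasing edge back through the relation we chose at $(p,q)$. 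The remark following Example \ref{ex:workout} already asserts this proposition is what makes condition $(\mc H2)$ superfluous, so I would end by noting that combining this with Proposition \ref{prop:connected} shows any subgraph built by the local recipe (choosing, at each vertex, any orthogonality relation at all) is automatically a Weingarten subgraph.
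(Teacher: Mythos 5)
Your strategy is genuinely different from the paper's, and the step it hinges on --- the ``interchange lemma'' --- is left unproven and is not true in the generality you need. Non-crossing neighbor moves at two different intervals do not act on disjoint portions of $[2k]$: the move at $\{t',t'+1\}$ joins that interval to some \emph{other} pair $\{x,y\}$ of $p$, and nothing prevents $\{x,y\}$ from being the very interval $\{t,t+1\}$ at which you chose your orthogonality relation, in which case $\{t,t+1\}$ is destroyed as a block of the new pairing and your chosen bundle has no counterpart after the move. Likewise an interval of $p$ can overlap an interval of $q$, and a deletion edge changes the ambient set from $[2k]$ to $[2k-2]$. So the assertion that each bundle is ``geodesically closed,'' or that a geodesic whose first edge lies in one bundle can be transported into another, is essentially the content of the proposition itself and cannot be assumed; your induction on $L(p,q)$ still invokes the interchange lemma at the inductive step, so it does not close the circle. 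Note also that in an arbitrary directed graph there is no reason why every out-edge bundle from a vertex should contain a distance-decreasing edge --- this is a special feature of $\mc G$ that requires an input beyond pure graph combinatorics.

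The input the paper uses is analytic, and it is available because Theorem \ref{thm:Laurent-series} is already proved at this point: every re-signed function $\widetilde{\Wg}_d(p_1,q_1)$ is a Laurent series with \emph{nonnegative} integer coefficients and leading term exactly $m_0(p_1,q_1)d^{-L(p_1,q_1)}$ with $m_0 \ge 1$. The chosen orthogonality relation, after re-signing via Proposition \ref{prop:parity}, reads
\[
\widetilde{\Wg}_d(p,q) = d^{-1}\sum_{(p_1,q_1)} \widetilde{\Wg}_d(p_1,q_1),
\]
the sum running over the targets of the bundle. If every target satisfied $L(p_1,q_1) = L(p,q)+1$, the right-hand side would be $O(d^{-L(p,q)-2})$, contradicting the nonzero $m_0(p,q)d^{-L(p,q)}$ term on the left; positivity of the coefficients is what rules out cancellation among the summands. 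To salvage your approach you would have to prove the interchange lemma with all of the interaction cases handled (shared points, consumed intervals, deletions), whereas the already-established Laurent expansion settles the matter in a few lines.
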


\begin{proof}
Suppose, to get a contradiction, that there exists a $(p,q) \ne (\emptyset, \emptyset)$ and a Weingarten orthogonaliy relation at $(p,q)$ such that $L(p_1,q_1)  = L(p,q) +1$ for all the resulting edges $((p,q), (p_1,q_1))$ associated to this relation.  Consider the (positive) quantity $\widetilde{\Wg}_d(p,q)$ defined in the proof of Theorem \ref{thm:Laurent-series}.  On the one hand, Theorem \ref{thm:Laurent-series} gives 
\begin{align} \label{aa} \widetilde{\Wg}_d(p,q) = \sum_{r \ge 0} m_r(p,q)d^{-L(p,q)-2r} = m_0(p,q) d^{-L(p,q)} + O(d^{-L(p,q)-2}).
\end{align}
On the other hand, repeating the argument in the proof of that theorem, we can equally write 
\[\widetilde{\Wg}_d(p,q) = \sum_{} \widetilde{\Wg}_d(p_1,q_1),\]
where the above sum runs over all edges $((p,q), (p_1,q_1)) \in E_\mc G$ that are associated to our chosen orthogonality relation.  Applying Theorem \ref{thm:Laurent-series} again to the terms on the right side of the above, we get 
\begin{align}\label{bb}
\widetilde{\Wg}_d(p,q) &= \sum_{r \ge 0} \sum_{(p_1,q_1)}  m_r(p_1,q_1)d^{-L(p_1,q_1)-2r} =  \Big(\sum_{(p_1,q_1)}  m_r(p_1,q_1)\Big)d^{-L(p,q)-1} + O(d^{-L(p,q)-3}). 
\end{align}     
Clearly the asymptotics \eqref{aa} and \eqref{bb} contradict each other, completing the proof.
\end{proof}

We now conclude the paper by discussing applications of our results to the asymptotic theory of the free orthogonal Weingarten functions.

\subsection{Optimal Weingarten estimates}
As mentioned in the introduction, the problem of computing the large $d$ asymptotics of the Weingarten function has recieved a lot of attention in recent years in the context of distributional symmetries in free probability.  Of particular interest there is the problem of identifying the leading non-zero coefficient in the Laurent expansion of $\Wg_d(p,q)$  centered at the origin (provided such a coefficient exists).  Partial results along these have been obtained previously, and are usually given the term {\it Weingarten estimates}.  See for example \cite{BaCo07, BaCuSp12, CuSp11}. It is important to note, however, that Theorem \ref{thm:Laurent-series} provides the first  complete answer to this problem by showing that such a non-zero coefficient must always exist and by describing the degree of the leading term in $\Wg_d(p,q)$ by way of graph theoretical data.   More precisely, the results of \cite{BaCuSp12} give Weingarten estimates of the form \[\Wg_{d}(p,q) =\Bigg\{\begin{matrix}O(d^{-2k + |p \vee q|}), & p \ne q \\
d^{-k} + O(d^{-k-2}), & p=q\end{matrix} \qquad (d \to \infty),\]  while  in Theorem 4.6 of \cite{CuSp11}, a sharper result is obtained which obtains the leading non-zero coefficient in the Laurent series expansion of $\Wg_{d}(p,q)$ precisely when a certain Moebius function they introduce on $NC_2(2k) \times NC_2(2k)$ is non-zero at $(p,q)$.  Unfortunately these prior results are far from covering all values.  Let us conclude by presenting a simple example to illustrate this.

\begin{example} \label{ex:CS-fail}
Let $k=4$, $p = \{1,6\}\{2,5\}\{3,4\}\{7,8\}$, and $q = \{1,2\}\{3,8\}\{4,7\}\{5,6\}$.  Using  arch diagrams as in Example \ref{ex:workout}, we can depict the pair $(p,q)$ as follows: \begin{equation}
	\begin{tikzpicture}[baseline=(current  bounding  box.center),
			wh/.style={circle,draw=black,thick,inner sep=1mm},
			bl/.style={circle,draw=black,fill=black,thick,inner sep=1mm}]
		\node (1) at (0,0) [bl] {};
		\node (2) at (1,0) [bl] {};
		\node (3) at (2,0) [bl] {};
		\node (4) at (3,0) [bl] {};
		\node (5) at (4,0) [bl] {};
		\node (6) at (5,0) [bl] {};
		\node (7) at (6,0) [bl] {};
		\node (8) at (7,0) [bl] {};
\draw (1) to [bend right] (2);
\draw (3) to [bend right] (8);
\draw (4) to [bend right] (7);
\draw (5) to [bend right] (6);

\draw (1) to [bend left] (6);
\draw (2) to [bend left] (5);
\draw (3) to [bend left] (4);
\draw (7) to [bend left] (8);
	
	\end{tikzpicture}
\end{equation}
Evidently $|p\vee q| = 2$, and it easily follows that the results of \cite{BaCuSp12, CuSp11} yield a Weingarten asymptotic of the form \[\Wg_{d}(p,q) = O(d^{-2k +|p \vee q|}) =  O(d^{-6}).\]  But in fact the leading order of $\Wg_d(p,q)$ turns out to be much smaller in this example.  Using the notation of Theorem \ref{thm:Laurent-series}, one actually has $m_0(p,q) = 1$, $L(p,q) = 8$, and therefore 
\[
\Wg_{d}(p,q) = d^{-8} + O(d^{-10}).
\]
To see why this is the case, we proceed as in Example \ref{ex:workout} by  choosing a Weingarten subraph $\mc H$ and drawing the component of $\mc H$ relevant to $(p,q)$.  The following picture depicts such a component (where as before the blue arrows indicate the directed edges that appear and the white nodes connected by a starred arch indicate the choice of interval for the Weingarten orthogonality relation in forming $\mc H$).

\begin{equation}\label{graph2}
	\begin{tikzpicture}[baseline=(current  bounding  box.center),
			wh/.style={circle,draw=black,thick,inner sep=.5mm},
			bl/.style={circle,draw=black,fill=black,thick,inner sep=.5mm}, scale = 0.35]
		\node (11) at (0,0) [bl] {};
		\node (12) at (1,0) [bl] {};
		\node (13) at (2,0) [wh] {};
		\node (14) at (3,0) [wh] {};
		\node (15) at (4,0) [bl] {};
		\node (16) at (5,0) [bl] {};
		\node (17) at (6,0) [bl] {};
		\node (18) at (7,0) [bl] {};
\draw (11) to [bend right] (12);
\draw (13) to [bend right] (18);
\draw (14) to [bend right] (17);
\draw (15) to [bend right] (16);
\draw (11) to [bend left] (16);
\draw (12) to [bend left] (15);
\draw (13) to [bend left] (14);
\draw (17) to [bend left] (18);

\node () at (2.5,0.25) {$\star$};		
\node () at (-1.5,0.5) {$(p,q)$};		

\draw [<->, color=blue]
		(7.5,0) -- (9.5,0);

\draw [<->, color=blue]
		(17.5,0) -- (19.5,0); 

		\node (21) at (10,0) [bl] {};
		\node (22) at (11,0) [wh] {};
		\node (23) at (12,0) [wh] {};
		\node (24) at (13,0) [bl] {};
		\node (25) at (14,0) [bl] {};
		\node (26) at (15,0) [bl] {};
		\node (27) at (16,0) [bl] {};
		\node (28) at (17,0) [bl] {};
\draw (21) to [bend right] (22);
\draw (23) to [bend right] (28);
\draw (24) to [bend right] (27);
\draw (25) to [bend right] (26);

\draw (21) to [bend left] (26);
\draw (22) to [bend left] (23);
\draw (24) to [bend left] (25);
\draw (27) to [bend left] (28);

\node () at (11.5,0.25) {$\star$};		

		\node (31) at (20,0) [wh] {};
		\node (32) at (21,0) [wh] {};
		\node (33) at (22,0) [bl] {};
		\node (34) at (23,0) [bl] {};
		\node (35) at (24,0) [bl] {};
		\node (36) at (25,0) [bl] {};
		\node (37) at (26,0) [bl] {};
		\node (38) at (27,0) [bl] {};
\draw (31) to [bend right] (32);
\draw (33) to [bend right] (38);
\draw (34) to [bend right] (37);
\draw (35) to [bend right] (36);

\draw (31) to [bend left] (32);
\draw (33) to [bend left] (36);
\draw (34) to [bend left] (35);
\draw (37) to [bend left] (38);

\node () at (20.5,0.25) {$\star$};		

		\node (3'1) at (30,0) [bl] {};
		\node (3'2) at (31,0) [bl] {};
		\node (3'3) at (32,0) [bl] {};
		\node (3'4) at (33,0) [wh] {};
		\node (3'5) at (34,0) [wh] {};
		\node (3'6) at (35,0) [bl] {};
		\node (3'7) at (36,0) [bl] {};
		\node (3'8) at (37,0) [bl] {};
\draw (3'1) to [bend right] (3'2);
\draw (3'3) to [bend right] (3'8);
\draw (3'4) to [bend right] (3'7);
\draw (3'5) to [bend right] (3'6);

\draw (3'1) to [bend left] (3'8);
\draw (3'2) to [bend left] (3'7);
\draw (3'3) to [bend left] (3'6);
\draw (3'4) to [bend left] (3'5);

\node () at (33.5,0.25) {$\star$};		

\draw [->, color=blue]
		(27.5,0) -- (29.5,0); 

\draw [->, color=blue]
		(20,-.5) -- (20,-3.5); 

\draw [->, color=blue]
		(37.5,0) -- (40.5,0); 

\node (121) at (42,0) {$\cdots$ };


		\node (41) at (20,-4) [bl] {};
		\node (42) at (21,-4) [wh] {};
		\node (43) at (22,-4) [wh] {};
		\node (44) at (23,-4) [bl] {};
		\node (45) at (24,-4) [bl] {};
		\node (46) at (25,-4) [bl] {};
\draw (41) to [bend left] (44);
\draw (42) to [bend left] (43);
\draw (45) to [bend left] (46);

\draw (41) to [bend right] (46);
\draw (42) to [bend right] (45);
\draw (43) to [bend right] (44);

\node () at (21.5,-3.75) {$\star$};

\draw [<->, color=blue]
		(25.5,-4) -- (27.5,-4);

		\node (51) at (28,-4) [bl] {};
		\node (52) at (29,-4) [bl] {};
		\node (53) at (30,-4) [wh] {};
		\node (54) at (31,-4) [wh] {};
		\node (55) at (32,-4) [bl] {};
		\node (56) at (33,-4) [bl] {};
		
\draw (51) to [bend right] (56);
\draw (52) to [bend right] (55);
\draw (53) to [bend right] (54);

\draw (51) to [bend left] (52);
\draw (53) to [bend left] (54);
\draw (55) to [bend left] (56);

\node () at (30.5,-3.75) {$\star$};	

\draw [<->, color=blue]
		(33.5,-4) -- (35.5,-4);

		\node (61) at (36,-4) [bl] {};
		\node (62) at (37,-4) [bl] {};
		\node (63) at (38,-4) [bl] {};
		\node (64) at (39,-4) [wh] {};
		\node (65) at (40, -4) [wh] {};
		\node (66) at (41,-4) [bl] {};
		
\draw (61) to [bend right] (66);
\draw (62) to [bend right] (65);
\draw (63) to [bend right] (64);

\draw (61) to [bend left] (62);
\draw (63) to [bend left] (66);
\draw (64) to [bend left] (65);

\node () at (39.5,-3.75) {$\star$};	

\draw [->, color=blue]
		(28,-4.5) -- (28,-7.5);

	
		\node (71) at (28,-8) [bl] {};
		\node (72) at (29,-8) [bl] {};
		\node (73) at (30,-8) [wh] {};
		\node (74) at (31,-8) [wh] {};

\draw (71) to [bend right] (74);
\draw (72) to [bend right] (73);

\draw (71) to [bend left] (72);
\draw (73) to [bend left] (74);

\node () at (30.5,-7.75) {$\star$};	

\draw [<->, color=blue]
		(28,-8.5) -- (28,-11.5);


		\node (81) at (28,-12) [bl] {};
		\node (82) at (29,-12) [bl] {};
		\node (83) at (30,-12) [wh] {};
		\node (84) at (31,-12) [wh] {};

\draw (81) to [bend right] (82);
\draw (83) to [bend right] (84);

\draw (81) to [bend left] (82);
\draw (83) to [bend left] (84);

\node () at (30.5,-11.75) {$\star$};	

\draw [->, color=blue]
		(28,-12.5) -- (28,-15.5);


		\node (91) at (28,-16) [wh] {};
		\node (92) at (29,-16) [wh] {};

\draw (91) to [bend right] (92);

\draw (91) to [bend left] (92);

\node () at (28.5,-15.75) {$\star$};	

\draw [->, color=blue]
		(29.5,-16) -- (31.5,-16);

\node (121) at (32,-16) [bl] {};
		\node (121) at (33.5,-15.5) {$(\emptyset, \emptyset)$};

	\end{tikzpicture}
\end{equation}
In the above picture, the dots on the top right corner indicate further vertices of $\mc H$ that lie in $NC_2(8) \times NC_2(8)$ that we have ommitted.  We are not concerned with these other vertices because they are not relevant for the computation of the leading order data $L(p,q)$ or $m_0(p,q)$ (they give rise to paths strictly longer than the single shortest path from $(p,q)$ to $(\emptyset, \emptyset)$ of length 8).  We thus conclude from inspection of this graph that $m_0(p,q) = 1$ and $L(p,q) = 8$.

\end{example}

\bibliographystyle{alpha}
\bibliography{brannan-biblio}

\end{document}